\newtheorem{thm}{Theorem}[section]
 \newtheorem{cor}{Corollary}[section]
 \newtheorem{lem}{Lemma}[section]
 \newtheorem{prop}{Proposition}[section]
 \newtheorem{defn}{Definition}[section]
\newtheorem{rem}{Remark}[section]
\def\Id{{\rm Id}\,}
\def\tilde{\widetilde}
\newcommand\Z{\mathbb{Z}}
\renewcommand{\div}{\mbox{\rm div}\;\!}
\begin{document}
\title[compressible Navier-Stokes-Poisson system]{Optimal time-decay estimates for the
compressible Navier-Stokes-Poisson equations without additional smallness assumptions}

\author{Weixuan Shi}
\address{Department of Mathematics, Nanjing
University of Aeronautics and Astronautics,
Nanjing 211106, P.R.China,}
\email{wxshi168@163.com}

\subjclass{35Q35,35B40,76N15}
\keywords{Compressible Navier-Stokes-Poisson equations; time decay rates; $L^{p}$ critical spaces}

\begin{abstract}
The present paper is dedicated to the large time asymptotic behavior of global strong solutions near constant equilibrium (away from vacuum) to the compressible Navier-Stokes-Poisson equations. Precisely, we present that under the same regularity assumptions as in \cite{SX2}, a \textit{different} time-decay framework of the $\dot{B}_{p,1}^{s}$ norm of the critical global solutions is established. The proof mainly depends on the pure energy argument \textit{without the spectral analysis}, which allows us to remove \textit{the usual smallness assumption of low frequencies of initial data}.
\end{abstract}

\maketitle

\section{Introduction} \setcounter{equation}{0}
The barotropic compressible Navier-Stokes-Poisson equations can be written as
\begin{equation} \label{Eq:1.1}
\left\{
\begin{array}{l}
\partial_{t}\varrho +\mathrm{div}\big( \varrho u\big) =0,\\ [1mm]
\partial_{t}( \varrho u) +\mathrm{div}\big( \varrho u\otimes u\big)+\nabla P\big(\varrho\big)=\mathrm{div}\big(2\mu \,D(u) +\lambda\,\mathrm{div}\,u\,\mathrm{Id}\big)- \varrho \nabla \psi,\\ [1mm]
-\Delta \psi=\varrho-\varrho_{\infty},
\end{array}
\right.
\end{equation}
which can be used to simulate the transport of charged particles in semiconductor devices under the influence of electric fields (see for example \cite{MRS} for more explanations). Here, $u=u(t,x)\in \mathbb{R}^{d}$ (with $(t,x)\in [0, +\infty)\times \mathbb{R}^{d}$ ) and $\varrho =\varrho (t,x)$ stand for the
velocity field of charged particles and the density, respectively. The function $\psi=\psi(t,x)$ denotes the electrostatic potential force. The barotropic assumption means that the pressure $P$ is given suitably smooth function of $\varrho$. The notation $D(u)\triangleq\frac{1}{2}(\nabla u+{}^T\!\nabla u)$ stands for  the {\it deformation tensor}, and $\nabla$ and $\div$ are the gradient and divergence operators with respect to the space variable. The Lam\'{e} coefficients $\lambda$ and $\mu$ (the \textit{bulk} and \textit{shear viscosities}) are density-dependent functions, which are supposed to be smooth functions of density and to satisfy
\begin{equation}\label{Eq:1.2}
\mu>0 \ \ \hbox{and} \ \ \lambda +2\mu>0.
\end{equation}
The initial condition of \eqref{Eq:1.1} is prescribed by
\begin{equation}\label{Eq:1.3}
(\varrho,u)|_{t=0}=(\varrho _{0}(x),u_{0}(x)), \ \  x\in \mathbb{R}^{d}.
\end{equation}
We focus on solutions that are close to some constant equilibrium $(\varrho _{\infty},0)$ with $\varrho _{\infty}>0$, at infinity.

As for the many systems arising from mathematical physics, it is well known that  \textit{scaling invariance} plays a fundamental role. The main aim of this paper is to investigate the asymptotic behavior of (strong) global solutions to Cauchy problem \eqref{Eq:1.1}-\eqref{Eq:1.3} in the critical Besov spaces, that is in functional spaces endowed with norms that is invariant for all $l>0$ by the following transformation
\begin{equation*}
\varrho (t,x) \rightsquigarrow \varrho \big( l^{2}t,l x\big),\ \ u(t,x) \rightsquigarrow l u\big( l^{2}t,l x\big) ,\ \ \psi(t,x) \rightsquigarrow l^{-2}\psi\big(l^{2}t,l x\big).
\end{equation*}
Indeed, that definition of criticality corresponds to the scaling invariance  of system \eqref{Eq:1.1} if neglecting the lower order pressure term and electronic field term. To the best of our knowledge, the idea is now classical and comes from the research of incompressible Navier-Stokes system, see \cite{CM,FK,KY} and references therein. It is worth mentioning that if there isn't the Poisson potential, then system \eqref{Eq:1.1} reduces to the usual compressible Navier-Stokes system for baratropic fluids. There are some known results for compressible Navier-Stokes equations in the critical Besov spaces, see \cite{CD1,CMZ,DR1,DR2,DX,HB,OM,XJ,XX} and references therein. As regards the global existence for \eqref{Eq:1.1} in the critical Besov spaces, Hao and Li \cite{HL} established the global existence of small strong solution to \eqref{Eq:1.1} in the $L^{2}$ critical hybrid Besov space (in dimension $d\geq3$). Subsequently, Zheng \cite{ZXX} extended the result of \cite{HL} to the $L^{p}$ framework. However, they only considered the non oscillation case $2\leq p < d$ with $d\geq3$. In \cite{CO}, Chikami and Ogawa performed Lagrangian approach and then used Banach fixed point theorem to establish the local existence and uniqueness of solutions, which allows one to handle dimension $d\geq2$ in the general $L^{p}$ ($1<p<2d$) critical Besov spaces. Recently, Chikami and Danchin \cite{CD2} constructed the unique global strong solution near constant equilibrium in the critical $L^{p}$ framework and in any dimension $d\geq2$, where the cases of $2 \leq p \leq d$ and $p>d$ are both involved. For simplicity, those physical coefficients $\mu$ and $\lambda$ are assumed to be constant. In fact, their results still hold true in case that $\mu$ and $\lambda$ depend smoothly on the density. For convenience, we state it as follows (the reader is also referred to \cite{CD2}).
\begin{thm}\label{Thm1.1} Let $d\geq2$ and $p$ fulfill
\begin{equation}\label{Eq:1.4}
2\leq p\leq \min\big(4,2d/(d-2)\big) \ \hbox{and}, \ \hbox{additionally}, \ p\neq 4 \ \hbox{if} \ d=2.
\end{equation}
Assume that $P'\big(\varrho_{\infty} \big)>0$ and that \eqref{Eq:1.2} is satisfied.
There exists a small positive constant $c=c\big(p,d,\mu,\lambda, P,\varrho_{\infty}\big)$ and a universal integer $j_{0}\in
\mathbb{N}$ on $\mu$ and $\lambda$ such that if $a_{0}\triangleq (\varrho_{0}-\varrho_{\infty}) \in \dot{B}_{p,1}^{\frac {d}{p}}$, if $u_{0}\in \dot{B}_{p,1}^{\frac {d}{p}-1}$ and if in addition $(a_{0}^{\ell},\nabla u_{0}^{\ell} )\in \dot{B}_{2,1}^{\frac {d}{2}-2}$ with
\begin{equation*}
\mathcal{E}_{p,0}\triangleq \|( a_{0},\nabla u_{0}) \|_{\dot{B}_{2,1}^{\frac {d}{2}-2}}^{\ell}
+\|(\nabla a_{0},u_{0})\|_{\dot{B}_{p,1}^{\frac
{d}{p}-1}}^{h}\leq c,
\end{equation*}
then the Cauchy problem \eqref{Eq:1.1}-\eqref{Eq:1.3} admits a unique global-in-time solution $(\varrho,u)$ with $\varrho=\varrho_{\infty}+a$ and $(a,u)$ in the space $X_{p}$ defined by:
\begin{eqnarray*}
&&a^{\ell} \in \tilde{\mathcal{C}_{b}}(\mathbb{R_{+}};\dot{B}_{2,1}^{\frac {d}{2}-2})\cap L^{1}(\mathbb{R_{+}};\dot{B}_{2,1}^{\frac {d}{2}}), \ \ \ u^{\ell} \in \tilde{\mathcal{C}_{b}}(\mathbb{R_{+}};\dot{B}_{2,1}^{\frac {d}{2}-1})\cap L^{1}(\mathbb{R_{+}};\dot{B}_{2,1}^{\frac {d}{2}+1}), \\
&&a^{h}\in \tilde{\mathcal{C}_{b}}(\mathbb{R_{+}};\dot{B}_{p,1}^{\frac{d}{p}})\cap L^{1}(\mathbb{R_{+}};\dot{B}_{p,1}^{\frac {d}{p}}), \ \ \ \ u^{h}\in \tilde{\mathcal{C}_{b}}(\mathbb{R_{+}};\dot{B}_{p,1}^{\frac {d}{p}-1})\cap L^{1}(\mathbb{R_{+}};\dot{B}_{p,1}^{\frac {d}{p}+1}).
\end{eqnarray*}
Furthermore, we get for some constant $C=C(p,d,\mu,\lambda,P,\varrho_{\infty})$,
\begin{equation*}
\mathcal{E}_{p}(t)\leq C\mathcal{E}_{p,0},
\end{equation*}
for any $t>0$, where
\begin{equation*}
\mathcal{E}_{p}(t)\triangleq\|(a,\nabla u)\|_{\widetilde{L}_{t}^{\infty} (\dot{B}_{2,1}^{\frac {d}{2}-2})}^{\ell}+\|(a,\nabla u)\|_{L_{t}^{1} (\dot{B}_{2,1}^{\frac {d}{2}})}^{\ell}
+\|(\nabla a,u)\|_{\widetilde{L}_{t}^{\infty}(\dot{B}_{p,1}^{\frac {d}{p}-1})}^{h}
+\|(a,\nabla u)\|_{L_{t}^{1}(\dot{B}_{p,1}^{\frac {d}{p}})}^{h}.
\end{equation*}
\end{thm}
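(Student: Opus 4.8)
The plan is to derive Theorem~\ref{Thm1.1} from a local well-posedness statement together with uniform-in-time a priori estimates, closed by a standard continuation argument; this is the scheme of Chikami and Danchin \cite{CD2}, the only extra point in the density-dependent case being to keep track of a few additional commutator/composition terms. First I would set $a\triangleq\varrho-\varrho_{\infty}$ and rewrite \eqref{Eq:1.1} as a perturbation of the constant-coefficient linear system
\begin{equation*}
\partial_{t}a+\varrho_{\infty}\,\mathrm{div}\,u=f,\qquad
\partial_{t}u-\mathcal{A}u+\gamma\nabla a+\nabla(-\Delta)^{-1}a=g,
\end{equation*}
with $\gamma\triangleq P'(\varrho_{\infty})/\varrho_{\infty}$ and $\mathcal{A}\triangleq(\mu/\varrho_{\infty})\Delta+((\mu+\lambda)/\varrho_{\infty})\nabla\,\mathrm{div}$ evaluated at $\varrho_{\infty}$, while $f,g$ gather the quadratic and higher-order terms ($a\,\mathrm{div}\,u$, $u\cdot\nabla a$, $u\cdot\nabla u$, $I(a)\nabla a$, $a\nabla\psi$, and, when $\mu,\lambda$ depend on the density, $J(a)\mathcal{A}u$ and $\nabla K(a)\cdot\nabla u$ with $I,J,K$ smooth and vanishing at $0$). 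The feature that distinguishes \eqref{Eq:1.1} from the barotropic compressible Navier--Stokes system is the nonlocal restoring term $\nabla(-\Delta)^{-1}a$: formally eliminating $u$ turns the $a$-equation into a damped Klein--Gordon-type equation, and it is exactly this zeroth-order coupling that shifts the natural low-frequency energy level from $\dot B^{\frac d2-1}_{2,1}$ down to $\dot B^{\frac d2-2}_{2,1}$.

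For the \emph{local} theory I would invoke the Lagrangian/Banach fixed-point construction of Chikami and Ogawa \cite{CO}, which already permits $\mu,\lambda$ to depend smoothly on the density and covers all $p$ in the range \eqref{Eq:1.4}: for data away from vacuum with $a_{0}\in\dot B^{\frac dp}_{p,1}$, $u_{0}\in\dot B^{\frac dp-1}_{p,1}$ and the additional low-frequency $L^{2}$-regularity, one obtains some $T>0$ and a unique solution in $X_{p}$ on $[0,T]$, along with a continuation criterion: the solution extends as long as $\mathcal{E}_{p}(t)$ stays finite and $a$ stays bounded away from $-\varrho_{\infty}$. Hence it suffices to propagate a uniform smallness bound on $\mathcal{E}_{p}(t)$.

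The heart of the matter is the a priori estimate, which I would split at a frequency threshold $2^{j_{0}}$ chosen large in terms of $\mu,\lambda$. At \emph{low frequencies} one applies $\dot\Delta_{j}$ ($j\le j_{0}$) to the linear system and runs an $L^{2}$ energy estimate with a Lyapunov functional combining $\|(\dot\Delta_{j}a,\dot\Delta_{j}u)\|_{L^{2}}^{2}$, a cross term $\langle\nabla\dot\Delta_{j}a,\dot\Delta_{j}u\rangle$, \emph{and} the Poisson contribution $\|\nabla(-\Delta)^{-1}\dot\Delta_{j}a\|_{L^{2}}^{2}$; the last term supplies the extra low-frequency coercivity and a dissipation rate $\sim 2^{2j}$ at the unusual regularity $\tfrac d2-2$. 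Summing over $j\le j_{0}$ and using Duhamel then controls $\|(a,\nabla u)\|^{\ell}_{\widetilde L^{\infty}_{t}(\dot B^{\frac d2-2}_{2,1})}+\|(a,\nabla u)\|^{\ell}_{L^{1}_{t}(\dot B^{\frac d2}_{2,1})}$ by the data plus $\int_{0}^{t}\|(f,g)\|^{\ell}_{\dot B^{\frac d2-2}_{2,1}}$. At \emph{high frequencies} the Poisson term is a harmless lower-order smoothing perturbation, and I would pass to an effective unknown $w$ (a suitable combination of $u$ and $(-\Delta)^{-1}\nabla a$) that obeys a genuinely parabolic equation, while $a$ then solves a damped transport equation whose damping is of order zero in the high-frequency regime; maximal-regularity estimates for the parabolic part in $\dot B^{\frac dp-1}_{p,1}$ and for the damped-transport part in $\dot B^{\frac dp}_{p,1}$ give control of $\|(\nabla a,u)\|^{h}_{\widetilde L^{\infty}_{t}(\dot B^{\frac dp-1}_{p,1})}+\|(a,\nabla u)\|^{h}_{L^{1}_{t}(\dot B^{\frac dp}_{p,1})}$ in terms of the data and the nonlinear source.

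It remains to bound $f,g$. Each term is estimated by Besov product laws, commutator estimates, and the composition bound $\|F(a)\|_{\dot B^{s}_{p,1}}\leq C(\|a\|_{L^{\infty}})\|a\|_{\dot B^{s}_{p,1}}$ for $F$ smooth with $F(0)=0$; the quadratic Poisson term $a\nabla\psi$ is handled via $\nabla\psi=-\nabla(-\Delta)^{-1}a$ and the good low-frequency regularity of $a$. The role of condition \eqref{Eq:1.4} is precisely to make the mixed $L^{2}$-low-frequency/$L^{p}$-high-frequency interactions and the relevant products land in the correct spaces; granted it, every term of $f,g$ is bounded by $C\mathcal{E}_{p}(t)^{2}$, plus for the viscosity-dependent terms a factor $C\mathcal{E}_{p}(t)\,\|(a,\nabla u)\|_{L^{1}_{t}(\cdot)}$ that is likewise absorbable. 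Inserting these bounds into the linear estimates yields $\mathcal{E}_{p}(t)\leq C_{0}\mathcal{E}_{p,0}+C_{1}\mathcal{E}_{p}(t)^{2}$, and a bootstrap with $\mathcal{E}_{p,0}\leq c$ small gives $\mathcal{E}_{p}(t)\leq 2C_{0}\mathcal{E}_{p,0}$ on the maximal existence interval; in particular $\|a\|_{L^{\infty}}$ stays small, so no vacuum forms and the continuation criterion upgrades the local solution to a global one with the asserted bound. I expect the main obstacle to be exactly this a priori step: choosing the low-frequency Lyapunov functional so that the Poisson coupling closes the estimate at the non-standard index $\tfrac d2-2$, and handling the genuinely mixed $L^{2}$/$L^{p}$ nonlinear interactions together with the commutators produced by the density-dependent viscosities --- which is where \eqref{Eq:1.4} is used in full. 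The local theory, the product/composition toolbox, and the concluding bootstrap are by now routine.
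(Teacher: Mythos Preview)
Your sketch is a reasonable outline of the Chikami--Danchin argument, but there is no proof in this paper to compare it against: Theorem~\ref{Thm1.1} is not proved here. The paper explicitly imports the global existence result from \cite{CD2} (with the remark that the density-dependent viscosity case goes through with the same arguments) and then \emph{uses} it as a black box to study time-decay (Theorem~\ref{Thm1.2}). So the ``paper's own proof'' of Theorem~\ref{Thm1.1} is simply a citation.

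That said, your description of the \cite{CD2}/\cite{CO} strategy is accurate in its main lines: local theory via the Lagrangian fixed point of \cite{CO}, low-frequency $L^{2}$ energy with a Lyapunov functional incorporating the Poisson contribution (this is what forces the regularity shift to $\dot B^{\frac d2-2}_{2,1}$ for $a$), high-frequency $L^{p}$ analysis via an effective velocity so that the nonlocal term becomes lower order, product/composition estimates under \eqref{Eq:1.4}, and a bootstrap on $\mathcal{E}_{p}(t)$. One small correction: in the perturbation form used here the term $a\nabla\psi$ does not appear separately in $g$ --- the Poisson coupling is carried entirely by the linear term $\nabla(-\Delta)^{-1}a$ (see \eqref{Eq:1.9}). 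If you were actually writing up the proof you would need to be more careful about the high-frequency commutators $[u\cdot\nabla,\dot\Delta_{j}]a$ in the transport estimate for $a$ (this is where the extra $\|\nabla u\|_{\dot B^{d/p}_{p,1}}\|a\|_{\dot B^{d/p}_{p,1}}$ term in Lemma~\ref{Lem3.2} comes from), but as a plan your outline is sound.
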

Next, a natural question is to explore the large time asymptotic description of solutions in Theorem \ref{Thm1.1} in the general $L^{p}$ critical Besov spaces. Here, let us recall the spectral analysis briefly, which has been studied by Li, Matsumura \& Zhang \cite{LMZ}. By the detailed analysis of the Fourier transform of the Green function for the linearized system of \eqref{Eq:1.1}, it follows from \cite{LMZ} that if the initial perturbation $\left(\varrho_{0}-\varrho_{\infty}, u_{0}\right) \in L^{q}(\mathbb{R}^{3}) \cap H^{s}(\mathbb{R}^{3})$ with $q\in [1,2]$ and $s\geq 3$, the optimal time decay rates of $L^{q}$-$L^{2}$ type is
\begin{equation}\label{Eq:1.8}
\|(\varrho-\varrho_{\infty})(t)\|_{L^{2}}\leq C (1+t)^{-\frac{3}{2}(\frac{1}{q}-\frac{1}{2})} \ \ \hbox{and} \ \
\|u(t)\|_{L^{2}}\leq C (1+t)^{-\frac{3}{2}(\frac{1}{q}-\frac{1}{2})+\frac{1}{2}}.
\end{equation}
It is observed that the electric field has significant effects on the large time behavior of the density and velocity field, which is a different ingrdient in comparison with the situation of compressible Navier-Stokes equations. Indeed, we see that the $L^{2}$ norm of the velocity $u$ grows in time at the rate $\langle t\rangle^{\frac{1}{2}}$ possibly if taking $q=2$ in \eqref{Eq:1.8}, which seems to be a contradiction with the dissipativity of \eqref{Eq:1.1}. Note that the energy structure of \eqref{Eq:1.1}, it is natural to assume that $\nabla\psi_{0}\in L^{2}$. 
With the aid of the Poisson equation, we find that the condition $\nabla\psi_{0}\in L^{2}$ is equivalent to that $\Lambda^{-1}(\varrho_{0}-\varrho_{\infty})\in L^{2}$ with $\Lambda^{-1}f\triangleq\mathcal{F}^{-1}(|\xi|^{-1}\mathcal{F}f)$. Inspired by this, Wang \cite{WYJ} posed a stronger assumption, say $(\Lambda^{-1}(\varrho_{0}-\varrho_{\infty}),u_{0})\in L^{q}(\mathbb{R}^{3})$ with $q\in [1,2]$, which leads to the substituting decay in comparison with \eqref{Eq:1.8}:
\begin{equation*}
\|(\varrho-\varrho_{\infty})(t)\|_{L^{2}}\leq C (1+t)^{-\frac{3}{2}(\frac{1}{q}-\frac{1}{2})-\frac{1}{2}} \ \ \hbox{and} \ \
\|u(t)\|_{L^{2}}\leq C(1+t)^{-\frac{3}{2}(\frac{1}{q}-\frac{1}{2})}.
\end{equation*}
In this sense, the electric field does not slow down but rather enhances the dissipation of density such that it enjoys additional half time-decay rate than velocity in time, with the \textit{relatively} stronger assumption than \cite{LMZ}. So far there are lots of works dedicated to the convergence rates of solutions with high Sobolev regularity to \eqref{Eq:1.1}-\eqref{Eq:1.3}, see \cite{LMZ,LZ, WYJ,WW1,WW2} and references therein. 

Let us also take a look at some important progress concerning \eqref{Eq:1.1}-\eqref{Eq:1.3} in the critical framework. Bie, Wang \& Yao \cite{BWY} developed the method of \cite{DX} so as to get the large-time asymptotic behavior of the constructed solutions in \cite{ZXX}. However, owing to the global-in-time results, they only consider the non oscillation case $2\leq p < d$ and $d\geq3$. Recently, Chikami and Danchin \cite{CD2} proposed the description of the time-decay which allows one to handle dimension $d\geq2$ in the $L^{2}$ critical Besov space. In \cite{SX2}, the author \& Xu developed a new regularity assumption of low frequencies, where the regularity $s_{1}$ belongs to $(1-\frac{d}{2},s_{0}]$ with $s_{0}\triangleq \frac{2d}{p}-\frac{d}{2}$, and established the \textit{sharp} time-weighted inequality, which led to the optimal time-decay rates of strong solutions. These recent works (see for example  \cite{BWY,CD2,LMZ,LZ,WW1,WW2,SX2} and references therein) mainly relies on the refined time-weighted energy approach in \textit{the Fourier semi-group framework}, so the smallness assumption of low frequencies of initial data plays a key role. In this paper, we develop a pure energy method of \cite{XJ} without the spectral analysis under the same regularity assumption as in \cite{SX2}, which enables one to remove the smallness of low frequencies of initial data, and then establish the optimal time-decay rates of solutions to \eqref{Eq:1.1}-\eqref{Eq:1.3} in the general $L^{p}$ critical Besov spaces. It is convenient to rewrite \eqref{Eq:1.1} as the nonlinear perturbation form of $(\varrho_{\infty},0)$, looking at the nonlinearities as source terms. To make a clearer introduction to our result, we assume that $\varrho_{\infty}=1$ and $P'(1)=1$. Consequently, in terms of the new variables $(a,u)$, system \eqref{Eq:1.1} becomes
\begin{equation} \label{Eq:1.9}
\left\{
\begin{array}{l}
\partial _{t}a+ \mathrm{div} u=f, \\ [1mm]
\partial _{t}u-\mathcal{A}u+\nabla a+\nabla(-\Delta)^{-1} a=g,
\end{array}
\right.
\end{equation}
where
\begin{eqnarray*}
&&f \triangleq-\div \big(au\big), \\
&&g \triangleq-u\cdot \nabla u-I(a)\mathcal{A}u-k(a)\nabla a+\frac {1}{1+a}\mathrm{div}\big(2\widetilde{\mu}(a)\,D(u)
+\widetilde{\lambda}(a) \mathrm{div} u \,\Id \big)
\end{eqnarray*}
with
\begin{eqnarray*}
&\mathcal{A}\triangleq\mu _{\infty} \Delta +\big(\lambda _{\infty} +\mu _{\infty}\big)
\nabla \div \ \ \hbox{such that} \ \ 2\mu_{\infty}+\lambda _{\infty}=1 \ \ \hbox{and} \ \
\mu_{\infty} >0 \\
&(\mu _{\infty}\triangleq\mu (1) \ \ \hbox{and} \ \ \lambda _{\infty} \triangleq\lambda (1)), \ \ I(a) \triangleq \frac {a}{1+a}, \ \
k(a) \triangleq \frac{P^{\prime}(1+a)}{1+a}-1, \\
&\tilde{\mu }(a)\triangleq \mu (1+a)-\mu (1), \ \ \ \ \tilde{\lambda}(a)\triangleq \lambda(1+a)-\lambda (1).
\end{eqnarray*}
Denote $\Lambda^{s}f\triangleq\mathcal{F}^{-1}(|\xi|^{s}\mathcal{F}f)$ for $s\in \mathbb{R}$. Now, we state main results as follows.
\begin{thm}\label{Thm1.2}
Let those assumptions of Theorem \ref{Thm1.1} hold and $(\varrho,u)$ be the
corresponding global solution to \eqref{Eq:1.1}. If in addition $a^{\ell}_{0}\in \dot{B}_{2,\infty}^{-s_{1}-1}$ and $u^{\ell}_{0}\in \dot{B}_{2,\infty}^{-s_{1}}$ ($1-\frac{d}{2}<s_{1}\leq s_{0} \ \ \hbox{with} \ \ s_{0}\triangleq \frac{2d}{p}-\frac{d}{2}$) such that $\|a_{0}\|^{\ell}_{\dot{B}_{2,\infty}^{-s_{1}-1}}$ and $\|u_{0}\|^{\ell}_{\dot{B}_{2,\infty}^{-s_{1}}}$ are bounded, then we have
\begin{eqnarray*}
\|(\varrho-1)(t)\|_{\dot{B}_{p,1}^{s}}&\lesssim&
(1+t)^{-\frac{d}{2}(\frac{1}{2}-\frac{1}{p})-\frac{s_{1}+s+1}{2}} \ \ \hbox{if} \ \ -\tilde{s}_{1}-1<s\leq\frac{d}{p}-2,\\
\|u(t)\|_{\dot{B}_{p,1}^{s}}&\lesssim& (1+t)^{-\frac{d}{2}(\frac{1}{2}-\frac{1}{p})-\frac{s_{1}+s}{2}} \ \ \ \ \ \ \ \ \hbox{if} \ \ -\tilde{s}_{1}<s\leq\frac{d}{p}-1,
\end{eqnarray*}
for all $t\geq0$, where $\tilde{s}_{1}\triangleq s_{1}+d\big(\frac{1}{2}-\frac{1}{p}\big)$.
\end{thm}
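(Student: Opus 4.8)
The plan is to run the pure-energy scheme of \cite{XJ}, adapted to the Poisson coupling, the whole point being that only \emph{boundedness} (not smallness) of the low-frequency negative norms is used. Write $\mathcal G(t):=\|a(t)\|^{\ell}_{\dot B_{2,\infty}^{-s_1-1}}+\|u(t)\|^{\ell}_{\dot B_{2,\infty}^{-s_1}}$, let $\mathcal X_p(t)$ denote the instantaneous critical norm underlying Theorem \ref{Thm1.1} — schematically $\|(a,u)\|^{\ell}_{\dot B_{2,1}^{d/2-2}\times\dot B_{2,1}^{d/2-1}}+\|(a,u)\|^{h}_{\dot B_{p,1}^{d/p}\times\dot B_{p,1}^{d/p-1}}$ together with a small coercive cross term — and let $\mathcal D_p(t)$ be the associated dissipation: its low-frequency part lives two derivatives above $\mathcal X_p^{\ell}$ (parabolic scaling), while its high-frequency part satisfies $\mathcal D_p^{h}\gtrsim\mathcal X_p^{h}$ (parabolic gain on the velocity, unit-rate damping of $a$, after introducing Danchin's effective velocity $w\approx u+\nabla(-\Delta)^{-1}a$ suitably adapted to absorb the pressure and Poisson terms). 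Throughout, the global bound $\mathcal E_p(t)\le C\mathcal E_{p,0}$ with $\mathcal E_{p,0}$ small lets one treat every nonlinear term as a perturbation, and the systematic fact that $a$ is measured one derivative above $u$ — i.e. that $(\Lambda^{-1}a,u)$ is the natural unknown for the Poisson-coupled linear part — is what produces the extra half power of the density.

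\emph{Step 1 (propagation of $\mathcal G$).} Recast the linear part of \eqref{Eq:1.9} in the effective unknowns so that the order $-1$, low-frequency singular term $\nabla(-\Delta)^{-1}a$ becomes part of a symmetric dissipative system for $(\Lambda^{-1}a,u)$. Localizing with $\dot\Delta_j$ for $j\le j_0$ and performing the usual $L^2$ energy estimate (with the cross term that makes it coercive) gives, for each such $j$,
$$\frac{d}{dt}\|\dot\Delta_j(a,u)(t)\|_{L^2}+c\,2^{2j}\|\dot\Delta_j(a,u)(t)\|_{L^2}\lesssim\|\dot\Delta_j(f,g)(t)\|_{L^2},$$
hence $\|\dot\Delta_j(a,u)(t)\|_{L^2}\le\|\dot\Delta_j(a_0,u_0)\|_{L^2}+\int_0^t\|\dot\Delta_j(f,g)(\tau)\|_{L^2}\,d\tau$. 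Multiplying by $2^{-j(s_1+1)}$ on the density slot and by $2^{-js_1}$ on the velocity slot, taking $\sup_{j\le j_0}$, and estimating $f=-\div(au)$ and the terms of $g$ in the corresponding negative Besov norm by product and composition rules — always pairing them against a \emph{high}-regularity factor controlled by $\mathcal E_p(t)\lesssim\mathcal E_{p,0}$ — yields $\mathcal G(t)\lesssim\mathcal G(0)+\mathcal E_{p,0}\sup_{[0,t]}\mathcal G$, whence $\sup_{t\ge0}\mathcal G(t)\lesssim\mathcal G(0)<\infty$. The lower bound $s_1>1-\tfrac d2$ is exactly what keeps the regularity budget of these product estimates nonnegative.

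\emph{Step 2 (Lyapunov inequality and a first decay rate).} Rerunning the energy estimates of Theorem \ref{Thm1.1} in non-integrated form gives $\tfrac{d}{dt}\mathcal X_p+\mathcal D_p\lesssim(\text{quadratic})$, and the quadratic part is absorbed by the smallness of $\mathcal E_{p,0}$, so $\tfrac{d}{dt}\mathcal X_p+\tfrac12\mathcal D_p\le0$. On low frequencies the elementary interpolation $\mathcal X_p^{\ell}\lesssim(\mathcal D_p^{\ell})^{1-\theta}\,\mathcal G^{\theta}$ holds with $\theta=\tfrac{2}{d/2+s_1+1}\in(0,1)$ (again using $s_1>1-\tfrac d2$); combined with $\mathcal D_p^{h}\gtrsim\mathcal X_p^{h}$, with the smallness of $\mathcal X_p$, and with the boundedness of $\mathcal G$, this gives $\mathcal D_p\gtrsim\mathcal X_p^{1+\eta}$ for $\eta=\tfrac{2}{d/2+s_1-1}$. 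Therefore $\tfrac{d}{dt}\mathcal X_p+c\,\mathcal X_p^{1+\eta}\le0$, i.e.
$$\mathcal X_p(t)\lesssim(1+t)^{-1/\eta},\qquad \tfrac1\eta=\tfrac{d/2+s_1-1}{2}=\tfrac d2\Big(\tfrac12-\tfrac1p\Big)+\tfrac{(d/p-1)+s_1}{2}.$$

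\emph{Step 3 (from $\mathcal X_p$ to the full family, and the main obstacle).} The high-frequency parts of $\varrho-1$ and $u$ at any admissible regularity $s$ are bounded, by Bernstein's embedding at frequencies $\gtrsim2^{j_0}$, by $\mathcal X_p^{h}(t)\lesssim(1+t)^{-1/\eta}$, which decays at least as fast as the stated rate (these coincide at the top endpoints $s=\tfrac dp-1$ and $s=\tfrac dp-2$). For the low-frequency parts one interpolates the decaying norm $\mathcal X_p^{\ell}(t)\lesssim(1+t)^{-1/\eta}$ against the bounded weight $\mathcal G(t)$: with $\sigma:=s+d(\tfrac12-\tfrac1p)$, writing $\sigma$ as the appropriate convex combination of $d/2-1$ (resp. $d/2-2$) and $-s_1$ (resp. $-s_1-1$), and then passing from $\dot B_{2,1}$ to $\dot B_{p,1}$ by Bernstein, gives precisely $\|u(t)\|^{\ell}_{\dot B_{p,1}^{s}}\lesssim(1+t)^{-\frac d2(\frac12-\frac1p)-\frac{s+s_1}{2}}$ for $-\tilde s_1<s\le\tfrac dp-1$ and $\|(\varrho-1)(t)\|^{\ell}_{\dot B_{p,1}^{s}}\lesssim(1+t)^{-\frac d2(\frac12-\frac1p)-\frac{s+s_1+1}{2}}$ for $-\tilde s_1-1<s\le\tfrac dp-2$, the density's extra half-power coming from its extra derivative of weight; patching with Theorem \ref{Thm1.1} for $t$ in a bounded interval replaces $t$ by $1+t$ and covers all $t\ge0$. (Alternatively one can reach the same estimates by a time-weighted bootstrap on $\sup_{\tau\le t}\tau^{\gamma_k}\|(a,u)(\tau)\|_{\dot B^{\sigma_k}}$ fed by the block-wise Duhamel inequality, closer to \cite{XJ}.) The main difficulty is Step 1 — and any reincarnation of it inside a bootstrap: propagating $\mathcal G$ with only its boundedness forces a splitting of \emph{every} summand of $f$ and $g$, including the quotient $\tfrac1{1+a}\div\big(2\widetilde\mu(a)D(u)+\widetilde\lambda(a)\div u\,\Id\big)$ and the composite functions $I(a),k(a)$, so that a small high-regularity factor always appears, which is delicate near the endpoints $\sigma=-s_1,-s_1-1$ where the reference space is merely $\dot B_{2,\infty}$; subordinate to this is the bookkeeping needed to fold the nonlocal, low-frequency-singular term $\nabla(-\Delta)^{-1}a$ into the symmetrized/effective-unknown formulation used in Steps 1 and 2 without spoiling the gain-of-two dissipation structure at low frequencies.
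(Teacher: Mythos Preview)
Your three-step scheme is the same as the paper's: propagate the negative low-frequency norm (paper's Lemma 4.1, using the unknown $\Lambda^{-1}a$ rather than an effective velocity), derive a differential energy inequality at the critical level (paper's Section~5), interpolate against the bounded negative norm to obtain the Lyapunov inequality with exponent $1+\frac{2}{d/2+s_1-1}$, solve, and then interpolate again to get every intermediate $\dot B^s_{p,1}$ rate. The rates and interpolation parameters you compute match the paper's.

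Two points deserve correction, though neither breaks the argument. First, your Step~1 inequality $\mathcal G(t)\lesssim\mathcal G(0)+\mathcal E_{p,0}\sup_{[0,t]}\mathcal G$ is not what actually comes out: the high--high interactions (e.g.\ $u\cdot\nabla u^{h}$, $k(a)\nabla a^{h}$) cannot be paired against a factor of $\mathcal G$ at all, so the paper's Lemma~4.1 carries an additional source term $\int_0^t D_p^2(\tau)\,d\tau\lesssim\mathcal E_{p,0}^2$ independent of $\mathcal G$, and a Gronwall-type argument is then used; the conclusion $\sup_t\mathcal G(t)<\infty$ survives. Second, you do not mention the case distinction $2\le p\le d$ versus $p>d$ that the paper emphasizes: for $p>d$ the standard product estimate \eqref{Eq:4.6} fails and one must invoke the non-classical estimates of Proposition~\ref{Prop2.5} (your acknowledgment that ``Step~1 is the main difficulty'' is correct, but the concrete obstacle in the oscillation regime is precisely this).
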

Moreover, one has the density decay estimates of $\dot{B}^{-s_{1}-1}_{2,\infty}$-$L^{r}$ type and the velocity decay estimates of $\dot{B}^{-s_{1}}_{2,\infty}$-$L^{r}$ type.
\begin{cor}\label{Cor1.1}
	Let those assumptions of Theorem \ref{Thm1.2} be satisfied. Then the corresponding solution $\varrho$ fulfills
	\begin{equation*}
	\|\Lambda^{l}(\varrho-1)(t)\|_{L^{r}}\lesssim
	(1+t)^{-\frac{d}{2}(\frac{1}{2}-\frac{1}{r})-\frac{s_{1}+l+1}{2}}
	\end{equation*}
	for $p\leq r\leq\infty$ and $l\in\mathbb{R}$ satisfying $-\tilde{s}_{1}-1<l+d\big(\frac{1}{p}-\frac{1}{r}\big)\leq\frac{d}{p}-2$, and $u$ fulfills
	$$\|\Lambda^{m}u(t)\|_{L^{r}}\lesssim (1+t)^{-\frac{d}{2}(\frac{1}{2}-\frac{1}{r})-\frac{s_{1}+m}{2}}$$
	for $p\leq r\leq\infty$ and $m\in\mathbb{R}$ satisfying $-\tilde{s}_{1}<m+d\big(\frac{1}{p}-\frac{1}{r}\big)\leq\frac{d}{p}-1$.
\end{cor}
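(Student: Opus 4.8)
The plan is to read off the $L^{r}$ bounds directly from the $\dot{B}_{p,1}^{s}$ estimates of Theorem \ref{Thm1.2}, using only two classical facts about homogeneous Besov spaces. First, the embedding $\dot{B}_{r,1}^{0}\hookrightarrow L^{r}$ holds for every $r\in[1,\infty]$ (so the endpoint $r=\infty$ is permitted); since for each fixed $t>0$ the solution $(\varrho-1,u)(t)$ is a genuine decaying function, no polynomial ambiguity arises in the homogeneous realization. Second, the Sobolev-type embedding $\dot{B}_{p,1}^{\sigma}\hookrightarrow\dot{B}_{r,1}^{\sigma-d(\frac1p-\frac1r)}$ holds whenever $1\le p\le r\le\infty$ and $\sigma\in\mathbb{R}$. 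Combining these with the fact that $\Lambda^{l}$ is an isomorphism from $\dot{B}_{r,1}^{\sigma}$ onto $\dot{B}_{r,1}^{\sigma-l}$, the proof becomes a matter of choosing the right index and doing the exponent arithmetic.

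For the density, fix $r\in[p,\infty]$ and $l\in\mathbb{R}$ with $-\tilde{s}_{1}-1<l+d(\frac1p-\frac1r)\le\frac dp-2$, and set $s:=l+d(\frac1p-\frac1r)$, so that $s$ lies precisely in the window $-\tilde{s}_{1}-1<s\le\frac dp-2$ where the first estimate of Theorem \ref{Thm1.2} is valid. Then I would chain the embeddings as
\[
\|\Lambda^{l}(\varrho-1)(t)\|_{L^{r}}\lesssim\|\Lambda^{l}(\varrho-1)(t)\|_{\dot{B}_{r,1}^{0}}=\|(\varrho-1)(t)\|_{\dot{B}_{r,1}^{l}}\lesssim\|(\varrho-1)(t)\|_{\dot{B}_{p,1}^{l+d(\frac1p-\frac1r)}}=\|(\varrho-1)(t)\|_{\dot{B}_{p,1}^{s}},
\]
and Theorem \ref{Thm1.2} bounds the right-hand side by $(1+t)^{-\frac d2(\frac12-\frac1p)-\frac{s_{1}+s+1}{2}}$. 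It remains only to simplify the exponent: with $s=l+d(\frac1p-\frac1r)$,
\[
-\frac d2\Bigl(\frac12-\frac1p\Bigr)-\frac{s_{1}+s+1}{2}=-\frac d2\Bigl(\frac12-\frac1p\Bigr)-\frac d2\Bigl(\frac1p-\frac1r\Bigr)-\frac{s_{1}+l+1}{2}=-\frac d2\Bigl(\frac12-\frac1r\Bigr)-\frac{s_{1}+l+1}{2},
\]
which is exactly the claimed rate.

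The velocity estimate follows by the same scheme with $s$ adapted in the obvious way: for $r\in[p,\infty]$ and $m\in\mathbb{R}$ with $-\tilde{s}_{1}<m+d(\frac1p-\frac1r)\le\frac dp-1$, set $s:=m+d(\frac1p-\frac1r)$, which lands in the window $-\tilde{s}_{1}<s\le\frac dp-1$ of the second line of Theorem \ref{Thm1.2}; running $L^{r}\hookleftarrow\dot{B}_{r,1}^{0}$, then $\Lambda^{m}$, then $\dot{B}_{p,1}^{s}\hookrightarrow\dot{B}_{r,1}^{m}$, yields $\|\Lambda^{m}u(t)\|_{L^{r}}\lesssim\|u(t)\|_{\dot{B}_{p,1}^{s}}\lesssim(1+t)^{-\frac d2(\frac12-\frac1p)-\frac{s_{1}+s}{2}}$, and the same algebraic step converts the exponent to $-\frac d2(\frac12-\frac1r)-\frac{s_{1}+m}{2}$.

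There is no genuine analytic obstacle here; the entire content of the corollary is the embedding chain together with the bookkeeping that matches the index constraints. The one point worth stating carefully is the equivalence, via $s=l+d(\frac1p-\frac1r)$ (resp. $s=m+d(\frac1p-\frac1r)$), between the hypotheses imposed on $l$ (resp. $m$) in the corollary and the admissible ranges $-\tilde{s}_{1}-1<s\le\frac dp-2$ (resp. $-\tilde{s}_{1}<s\le\frac dp-1$) of Theorem \ref{Thm1.2}, so that the theorem may legitimately be invoked for every admissible pair $(r,l)$ (resp. $(r,m)$), including the endpoint $r=\infty$.
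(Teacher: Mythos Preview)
Your proposal is correct and follows essentially the same route as the paper: the paper's proof also chains the embeddings $\dot{B}^{0}_{r,1}\hookrightarrow L^{r}$ and $\dot{B}_{p,1}^{l+d(\frac1p-\frac1r)}\hookrightarrow \dot{B}_{r,1}^{l}$, then invokes the $\dot{B}_{p,1}^{s}$ decay from Theorem \ref{Thm1.2} (via \eqref{Eq:5.7} and \eqref{Eq:5.11}) and simplifies the exponent exactly as you do. The paper only spells out the density case and leaves the velocity case implicit, so your treatment is in fact slightly more complete.
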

\begin{rem}\label{Rem1.1}
In comparison with the recent work \cite{BWY,SX2,CD2}, the innovative ingredient is that the smallness of low frequencies is no longer needed in Theorem \ref{Thm1.2} and Corollary \ref{Cor1.1}. Moreover, noting that condition \eqref{Eq:1.4} allows us to consider the case
$p>d$ for which the velocity regularity exponent $\frac{d}{p}-1$ may be negative in physical dimensions $d=2,3$. The result thus applies to large highly oscillating initial velocity. Owing to the dissipative effect coming from the Poisson potential, we see that the $\dot{B}^{s}_{p,1}$ norm of density is faster at the rate $(1+t)^{-\frac{1}{2}}$ than that of velocity, which is an essential and different ingredient
in comparison with compressible Navier-Stokes equations (see for example \cite{DX,XJ,XX}).
\end{rem}
\begin{rem}\label{Rem1.2}
In our work \cite{SX2}, there is a little loss on time-decay rates due to using different Sobolev embeddings at low frequencies and high frequencies. For example, in the case of $s_{1}=s_{0}$, it was shown that the density (the velocity) itself decayed to equilibrium in $L^{p}$ norm with the rate of $(1+t)^{-d\big(\frac{1}{p}-\frac{1}{4}\big)-\frac{1}{2}}$ ($(1+t)^{-d\big(\frac{1}{p}-\frac{1}{4}\big)}$) for $t\rightarrow \infty$. The present results avoid this minor flaw and indicate the optimal decay of the density (the velocity) as fast as $(1+t)^{-\frac{d}{2p}-\frac{1}{2}}$ ($(1+t)^{-\frac{d}{2p}}$), which are satisfactory.
\end{rem}
Let us end this section by sketching the strategy for the proof of Theorem \ref{Thm1.2}. Throughout the process, the main task of this paper is to establish a Lyapunov-type inequality in time for energy norms (see \eqref{Eq:5.5}) by means of the pure energy method of \cite{XX} (independent of spectral analysis). Indeed, in the Sobolev framework of high Sobolev regularity, the idea was initiated by Strain \& Guo \cite{SG} for several Boltzmann type equations and then developed by Guo \& Wang\cite{GW} for compressible Navier-Stokes equations and Wang \cite{WYJ} for compressible Navier-Stokes-Poisson equations. In the critical regularity framework however, there is a loss of one derivative of density (see the term $u\cdot \nabla a$) in the transport equation. Clearly, their method fails to take effect in critical spaces.

Due to the coupled Poisson potential, there is a nonlocal term $\nabla(-\Delta)^{-1}a$ (that is equivalent to $\tilde{a}=\Lambda^{-1}a$) available in the velocity equation. By applying the hyperbolic energy approach, one can get the parabolic decay for the low frequencies of $(\tilde{a},u)$. In the high-frequency regime, the nonlocal term $\nabla(-\Delta)^{-1}a$ is no longer effective and the large behaves of solutions the same as that of compressible Navier-Stoke system. Consequently, one can get the dissipative mechanism of \eqref{Eq:1.1}-\eqref{Eq:1.2}. On the other hand, owing to the general regularity that $s_{1}$ belongs to the whole range $(1-\frac{d}{2},s_{0}]$ with $s_{0}=\frac{2d}{p}-\frac{d}{2}$, the low-frequency analysis is more complicated. Precisely, in light of low and high frequency decomposition, one splits the nonlinear term $\big(\Lambda^{-1}f,g\big)$ into $\big(\Lambda^{-1}f^{\ell},g^{\ell}\big)$ and $\big(\Lambda^{-1}f^{h},g^{h}\big)$ (see section 4). To bound the nonlinear term $\big(\Lambda^{-1}f^{\ell},g^{\ell}\big)$, we develop some non classical Besov product estimates \eqref{Eq:4.4}-\eqref{Eq:4.5} to get desired result. For the term $\big(\Lambda^{-1}f^{h},g^{h}\big)$, we proceed differently the analysis depending on whether $2\leq p\leq d$ (non oscillation) and $p>d$ (oscillation). The former case depends on Besov product estimates (see \eqref{Eq:4.6}), while the later case (that is relevant in physical dimension $d=2,3$) lies in non-classical product estimates in Proposition \ref{Prop2.5}. Combining these estimates leads to the evolution of Besov norm of solutions. Finally, nonlinear product estimates and real interpolations allow us to obtain the Lyapunov-type inequality \eqref{Eq:5.5} for energy norms.

The rest of the paper unfolds as follows: In section 2, we briefly recall Littlewood-Paley decomposition, Besov spaces  and useful analysis tools. In section 3, we establish
the low-frequency and high-frequency estimates of solutions. Section 4 is devoted to bounding the evolution of negative Besov norms,
which plays the key role in deriving the Lyapunov-type inequality for energy norms. In
the last section (Section 5), we show the proofs of Theorem \ref{Thm1.2} and Corollary \ref{Cor1.1}.

\section{Preliminary}\setcounter{equation}{0}
Throughout the paper, $C>0$ stands for a generic ``constant''. For brevity, we write $f\lesssim g$ instead of $f\leq Cg$. The notation $f\approx g$ means that $f\lesssim g$ and $g\lesssim f$. For any Banach space $X$ and $f,g\in X$, we agree that $\|(f,g)\| _{X}\triangleq \|f\| _{X}+\|g\|_{X}$. For all $T>0$ and $\theta \in[1,+\infty]$, we denote by $L_{T}^{\theta}(X) \triangleq L^{\theta}([0,T];X)$ the set of measurable functions $f:[0,T]\rightarrow X$ such that $t\mapsto\|f(t)\|_{X}$ is in $L^{\theta}(0,T)$.

\subsection{Littlewood-Paley decomposition and Besov spaces} \setcounter{equation}{0}
Let us recall Littlewood-Paley decomposition and Besov spaces for convenience. The reader is referred to Chap. 2 and Chap. 3 of \cite{BCD} for more details. Choose a smooth radial non increasing function $\chi $ with $\mathrm{Supp}\,\chi \subset
B\big(0,\frac {4}{3}\big)$ and $\chi \equiv 1$ on $B\big(0,\frac{3}{4}\big)$. Set $\varphi (\xi) =\chi (\xi/2)-\chi (\xi)$. It is not difficult to check that
\begin{equation*}
\sum_{j\in \mathbb{Z}}\varphi ( 2^{-j}\cdot ) =1\ \ \hbox{in}\ \
\mathbb{R}^{d}\setminus \{ 0\} \ \ \hbox{and}\ \ \mathrm{Supp}\,\varphi \subset \big\{ \xi \in \mathbb{R}^{d}:3/4\leq |\xi|\leq 8/3\big\}.
\end{equation*}
The homogeneous dyadic blocks $\dot{\Delta}_{j}$ ($j\in\mathbb{Z}$) are defined by
\begin{equation*}
\dot{\Delta}_{j}f\triangleq \varphi (2^{-j}D)f=\mathcal{F}^{-1}\big(\varphi
(2^{-j}\cdot )\mathcal{F}f\big)=2^{jd}h(2^{j}\cdot )\star f \ \ \hbox{with}\ \
h\triangleq \mathcal{F}^{-1}\varphi.
\end{equation*}
Consequently, one has the unit decomposition for any tempered distribution $f\in S^{\prime }(\mathbb{R}^{d})$
\begin{equation} \label{Eq:2.1}
f=\sum_{j\in \mathbb{Z}}\dot{\Delta}_{j}f.
\end{equation}
As it holds only modulo polynomials, it is convenient to consider the subspace of those tempered distributions $f$ such that
\begin{equation}\label{Eq:2.2}
\lim_{j\rightarrow -\infty }\| \dot{S}_{j}f\| _{L^{\infty}}=0,
\end{equation}
where $\dot{S}_{j}f$ stands for the low frequency cut-off defined by $\dot{S}_{j}f\triangleq\chi (2^{-j}D)f$. Indeed, if \eqref{Eq:2.2} is fulfilled, then \eqref{Eq:2.1} holds in $S'(\mathbb{R}^{d})$. For convenience, we denote by $S'_{0}(\mathbb{R}^{d})$ the subspace of tempered distributions satisfying \eqref{Eq:2.2}.

In terms with Littlewood-Paley decomposition, Besov spaces are defined as follows.
\begin{defn}\label{Defn2.1}
For $\sigma\in \mathbb{R}$ and $1\leq p,r\leq\infty,$ the homogeneous
Besov spaces $\dot{B}^{\sigma}_{p,r}$ is defined by
$$\dot{B}^{\sigma}_{p,r}\triangleq\big\{f\in S'_{0}:\|f\|_{\dot{B}^{\sigma}_{p,r}}<+\infty\big\},$$
where
\begin{equation}\label{Eq:2.3}
\|f\|_{\dot B^{\sigma}_{p,r}}\triangleq\|(2^{j\sigma}\|\dot{\Delta}_{j}  f\|_{L^p})\|_{\ell^{r}(\Z)}.
\end{equation}
\end{defn}
On the other hand, a class of mixed space-time Besov spaces are also used, which was initiated by J.-Y. Chemin and N. Lerner \cite{CL} (see also \cite{CJY} for the particular case of Sobolev spaces).
\begin{defn}\label{Defn2.2}
For $T>0, \sigma\in\mathbb{R}, 1\leq r,\theta\leq\infty$, the homogeneous Chemin-Lerner space $\widetilde{L}^{\theta}_{T}(\dot{B}^{\sigma}_{p,r})$
is defined by
\begin{equation*}
\widetilde{L}^{\theta}_{T}(\dot{B}^{\sigma}_{p,r})\triangleq\big\{f\in L^{\theta}(0,T;S'_{0}):\|f\|_{\widetilde{L}^{\theta}_{T}(\dot{B}^{\sigma}_{p,r})}<+\infty\big\},
\end{equation*}
where
\begin{equation}\label{Eq:2.4}
\|f\|_{\widetilde{L}^{\theta}_{T}(\dot{B}^{\sigma}_{p,r})}\triangleq\|(2^{j\sigma}\|\dot{\Delta}_{j}f\|_{L^{\theta}_{T}(L^{p})})\|_{\ell^{r}(\mathbb{Z})}.
\end{equation}
\end{defn}
For notational simplicity, index $T$ is omitted if $T=+\infty $.
We agree with the notation
\begin{equation*}
\tilde{\mathcal{C}}_{b}(\mathbb{R_{+}};\dot{B}_{p,r}^{\sigma})\triangleq \big\{f \in
\mathcal{C}(\mathbb{R_{+}};\dot{B}_{p,r}^{\sigma})\ \hbox{s.t}\ \big\|f\big\| _{\tilde{L}^{\infty}(\dot{B}_{p,r}^{\sigma})}<+\infty \big\} .
\end{equation*}
The Chemin-Lerner space $\tilde{L}^{\theta}_{T}(\dot{B}^{\sigma}_{p,r})$ may be linked with the standard spaces $L_{T}^{\theta} (\dot{B}_{p,r}^{\sigma})$ by means of Minkowski's
inequality.
\begin{rem}\label{Rem2.1}
It holds that
$$\|f\|_{\widetilde{L}^{\theta}_{T}(\dot{B}^{\sigma}_{p,r})}\leq\|f\|_{L^{\theta}_{T}(\dot{B}^{\sigma}_{p,r})}\,\,\,
\mbox{if} \,\, \, r\geq\theta;\ \ \ \
\|f\|_{\widetilde{L}^{\theta}_{T}(\dot{B}^{\sigma}_{p,r})}\geq\|f\|_{L^{\theta}_{T}(\dot{B}^{\sigma}_{p,r})}\,\,\,
\mbox{if}\,\,\, r\leq\theta.
$$
\end{rem}
Restricting the above norms \eqref{Eq:2.3} and \eqref{Eq:2.4} to the low or high frequencies parts of distributions will be fundamental in our method. For instance, let us fix some integer $j_{0}$ (the value of which will
follow from the proof of the high-frequency estimates) and put\footnote{Note that for technical reasons, we need a small
overlap between low and high frequencies.}
$$\| f\| _{\dot{B}_{p,1}^{\sigma}}^{\ell} \triangleq \sum_{j\leq
j_{0}}2^{j\sigma}\| \dot{\Delta}_{j}f\|_{L^{p}} \ \mbox{and} \ \|f\|_{\dot{B}_{p,1}^{\sigma}}^{h}\triangleq \sum_{j\geq j_{0}-1}2^{j\sigma}\| \dot{\Delta}_{j}f\| _{L^{p}},$$
\begin{equation*}
\|f\| _{\tilde{L}_{T}^{\infty} (\dot{B}_{p,1}^{\sigma})}^{\ell} \triangleq \sum_{j\leq j_{0}}2^{j\sigma}\|\dot{\Delta}_{j}f\|_{L_{T}^{\infty} (L^{p})} \
\mbox{and} \ \|f\| _{\tilde{L}_{T}^{\infty} (\dot{B}_{p,1}^{\sigma})}^{h}\triangleq \sum_{j\geq j_{0}-1}2^{j\sigma}\| \dot{\Delta}_{j}f\|
_{L_{T}^{\infty} (L^{p})}.
\end{equation*}
\subsection{Analysis tools in Besov spaces}
Let us recall the classical properties (see \cite{BCD}):
\begin{prop} \label{Prop2.1}
\begin{itemize}
\item \ \emph{Scaling invariance:} For any $\sigma\in \mathbb{R}$ and $(p,r)\in
[1,\infty ]^{2}$, there exists a constant $C=C(\sigma,p,r,d)$ such that for all $\lambda >0$ and $f\in \dot{B}_{p,r}^{\sigma}$, we have
\begin{equation*}
C^{-1}\lambda ^{\sigma-\frac {d}{p}}\|f\|_{\dot{B}_{p,r}^{\sigma}}
\leq \big\|f(\lambda \cdot)\big\|_{\dot{B}_{p,r}^{\sigma}}\leq C\lambda ^{\sigma-\frac {d}{p}}\|f\|_{\dot{B}_{p,r}^{\sigma}}.
\end{equation*}

\item \emph{Completeness:} $\dot{B}^{\sigma}_{p,r}$ is a Banach space whenever $
\sigma<\frac{d}{p}$ or $\sigma\leq \frac{d}{p}$ and $r=1$.

\item \emph{Action of Fourier multipliers:} If $F$ is a smooth homogeneous of
degree $m$ function on $\mathbb{R}^{d}\backslash \{0\}$ then
\begin{equation*}
F(D):\dot{B}_{p,r}^{\sigma}\rightarrow \dot{B}_{p,r}^{\sigma-m}.
\end{equation*}
\end{itemize}
\end{prop}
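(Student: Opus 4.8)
The three assertions of Proposition \ref{Prop2.1} are standard consequences of the definition \eqref{Eq:2.3} together with the structure of the dyadic decomposition, and I would prove each one by reducing it to elementary properties of $L^p$ norms under the action of Fourier multipliers with well-localized symbols. The common technical engine is \emph{Bernstein-type} control of convolution operators whose Fourier support is an annulus $2^j\{3/4\le|\xi|\le 8/3\}$: for such a symbol $\theta$, the operator $\theta(2^{-j}D)$ is convolution with $2^{jd}k(2^j\cdot)$ where $k=\mathcal F^{-1}\theta$, and Young's inequality gives $\|\theta(2^{-j}D)g\|_{L^p}\le \|k\|_{L^1}\|g\|_{L^p}$ uniformly in $j$. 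This single estimate, applied blockwise and summed in $\ell^r$ with the appropriate $2^{j\sigma}$ weights, underlies all three parts.

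\textbf{Scaling invariance.} I would start from the dilation $f_\lambda(x)=f(\lambda x)$ and track how each dyadic block transforms. A direct computation on the Fourier side shows $\dot\Delta_j(f_\lambda)=\big(\dot\Delta_{j+\log_2\lambda}f\big)_\lambda$ when $\lambda$ is a power of $2$; for general $\lambda>0$ one writes $\lambda=2^{k}\mu$ with $k=\lfloor\log_2\lambda\rfloor$ and $\mu\in[1,2)$, absorbing the bounded factor $\mu^{\sigma-d/p}$ and a finite band overlap into the constant $C$. Using the $L^p$ scaling $\|g_\lambda\|_{L^p}=\lambda^{-d/p}\|g\|_{L^p}$, one finds $2^{j\sigma}\|\dot\Delta_j(f_\lambda)\|_{L^p}\approx \lambda^{\sigma-d/p}\,2^{(j+\log_2\lambda)\sigma}\|\dot\Delta_{j+\log_2\lambda}f\|_{L^p}$, and taking the $\ell^r$ norm—which is invariant under the index shift $j\mapsto j+\log_2\lambda$—yields the two-sided bound with the stated power $\lambda^{\sigma-d/p}$. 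The overlap between adjacent annuli forces the use of $\approx$ rather than equality, which is exactly why the constant $C=C(\sigma,p,r,d)$ appears.

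\textbf{Completeness.} Here I would verify that \eqref{Eq:2.3} is a norm on $S_0'$ (homogeneity and the triangle inequality are immediate from the $L^p$ and $\ell^r$ norms, and nondegeneracy uses the decomposition \eqref{Eq:2.1} valid modulo polynomials, with the condition \eqref{Eq:2.2} ruling out nonzero polynomials), and then that Cauchy sequences converge. Given a Cauchy sequence $(f_n)$ in $\dot B^\sigma_{p,r}$, each frequency-localized piece $\dot\Delta_j f_n$ is Cauchy in $L^p$, hence converges to some $g_j$; one sets $f=\sum_j g_j$ and must check that this sum converges in $S_0'$ and that $\dot\Delta_j f=g_j$. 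The role of the hypothesis $\sigma<d/p$ (or $\sigma\le d/p$ with $r=1$) is precisely to guarantee that the low-frequency tail $\sum_{j\le 0}g_j$ converges in $L^\infty$ (via a Bernstein embedding $\dot\Delta_j:\,L^p\hookrightarrow L^\infty$ costing $2^{jd/p}$), so that \eqref{Eq:2.2} holds for the limit and $f$ is a genuine element of $S_0'$ rather than only defined modulo polynomials. I expect this low-frequency summability to be the main obstacle, and the one place where the dimensional restriction on $\sigma$ is genuinely used.

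\textbf{Action of Fourier multipliers.} If $F$ is smooth and homogeneous of degree $m$ on $\mathbb R^d\setminus\{0\}$, I would exploit that $\dot\Delta_j F(D)=F(D)\dot\Delta_j$ and that on the support of $\varphi(2^{-j}\cdot)$ the rescaled symbol $2^{-jm}F(2^j\cdot)$ is a fixed smooth function supported in the annulus, independent of $j$ up to the homogeneity factor. Writing $F(D)\dot\Delta_j g=2^{jm}\,\Theta(2^{-j}D)\dot\Delta_j g$ with $\Theta(\xi)=|\xi|^{m}F(\xi/|\xi|)\tilde\varphi(\xi)$ for a suitable cutoff $\tilde\varphi\equiv1$ near the annulus, the uniform $L^1$ bound on $\mathcal F^{-1}\Theta$ gives $\|\dot\Delta_j F(D)g\|_{L^p}\lesssim 2^{jm}\|\dot\Delta_j g\|_{L^p}$. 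Multiplying by $2^{j(\sigma-m)}$ and summing in $\ell^r$ shows $\|F(D)g\|_{\dot B^{\sigma-m}_{p,r}}\lesssim \|g\|_{\dot B^\sigma_{p,r}}$, which is the claimed boundedness $\dot B^\sigma_{p,r}\to\dot B^{\sigma-m}_{p,r}$. The only care needed is the uniform-in-$j$ control of $\|\mathcal F^{-1}\Theta\|_{L^1}$, which follows from the scale invariance of the annular localization together with smoothness of $F$ away from the origin.
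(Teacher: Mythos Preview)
Your proof sketch is correct and follows the standard arguments. Note, however, that the paper does not actually prove Proposition~\ref{Prop2.1}: it is stated as a recollection of classical properties with a reference to \cite{BCD}, and no proof is given in the paper itself. Your outline is essentially the argument one finds in that reference, so there is nothing to compare beyond observing that you have supplied what the paper chose to cite.
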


\begin{prop} \label{Prop2.2} Let $1\leq p,r_{1},r_{2}, r\leq \infty$.
\begin{itemize}
\item \emph{Complex interpolation:} If $f\in \dot{B}_{p,r_{1}}^{\sigma_{1}} \cap \dot{B}_{p,r_2}^{\sigma_{2}}$ and  $\sigma_{1}\neq \sigma_{2}$, then $f\in \dot{B}_{p,r}^{\theta \sigma_{1}+(1-\theta )\sigma_{2}}$ for
all $\theta \in (0,1)$ and
\begin{equation*}
\|f\|_{\dot{B}_{p,r}^{\theta \sigma_{1}+(1-\theta )\sigma_{2}}}\lesssim \|f\| _{\dot{B}_{p,r_{1}}^{\sigma_{1}}}^{\theta}
\|f\|_{\dot{B}_{p,r_2}^{\sigma_{2}}}^{1-\theta }
\end{equation*}
with $\frac{1}{r}=\frac{\theta}{r_{1}}+\frac{1-\theta}{r_{2}}$.

\item \emph{Real interpolation:} If $f\in \dot{B}_{p,r_{1}}^{\sigma_{1}} \cap \dot{B}_{p,r_2}^{\sigma_{2}}$ and  $\sigma_{1}< \sigma_{2}$, then $f\in \dot{B}_{p,r}^{\theta \sigma_{1}+(1-\theta )\sigma_{2}}$ for all $\theta \in (0,1)$ and
\begin{equation*}
\|f\|_{\dot{B}_{p,r}^{\theta \sigma_{1}+(1-\theta )\sigma_{2}}}\lesssim \frac{C}{\theta(1-\theta)(\sigma_{2}-\sigma_{1})}\|f\| _{\dot{B}_{p,\infty}^{\sigma_{1}}}^{\theta} \|f\|_{\dot{B}_{p,\infty}^{\sigma_{2}}}^{1-\theta }.
\end{equation*}
\end{itemize}
\end{prop}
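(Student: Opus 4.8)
The plan is to treat both parts of the proposition by elementary, block-by-block estimates on the nonnegative sequences $\bigl(2^{j\sigma_i}\|\dot{\Delta}_j f\|_{L^p}\bigr)_{j\in\mathbb{Z}}$ ($i=1,2$), reducing the complex case to H\"older's inequality on $\ell^r(\mathbb{Z})$ and the real case to a geometric-series decomposition; no abstract interpolation functor enters (cf. \cite{BCD}). For the complex interpolation, with $\sigma=\theta\sigma_1+(1-\theta)\sigma_2$, I would begin from the pointwise identity
\begin{equation*}
2^{j\sigma}\|\dot{\Delta}_j f\|_{L^p}=\bigl(2^{j\sigma_1}\|\dot{\Delta}_j f\|_{L^p}\bigr)^{\theta}\bigl(2^{j\sigma_2}\|\dot{\Delta}_j f\|_{L^p}\bigr)^{1-\theta},
\end{equation*}
take the $\ell^r(\mathbb{Z})$-norm of both sides and apply H\"older's inequality with the exponents $\frac{r_1}{\theta r}$ and $\frac{r_2}{(1-\theta)r}$; these are $\ge1$ and conjugate precisely because the hypothesis $\frac{\theta}{r_1}+\frac{1-\theta}{r_2}=\frac1r$ is the relation $\frac{\theta r}{r_1}+\frac{(1-\theta)r}{r_2}=1$. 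This gives $\|f\|_{\dot{B}^{\sigma}_{p,r}}\le\|f\|_{\dot{B}^{\sigma_1}_{p,r_1}}^{\theta}\|f\|_{\dot{B}^{\sigma_2}_{p,r_2}}^{1-\theta}$ with constant $1$; the degenerate case $r=\infty$ forces $r_1=r_2=\infty$ and is mere sub-multiplicativity of the supremum.

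For the real interpolation I would again set $\sigma=\theta\sigma_1+(1-\theta)\sigma_2$, noting $\sigma-\sigma_1=(1-\theta)(\sigma_2-\sigma_1)>0$ and $\sigma-\sigma_2=-\theta(\sigma_2-\sigma_1)<0$, and use only the two crude bounds $\|\dot{\Delta}_j f\|_{L^p}\le2^{-j\sigma_i}\|f\|_{\dot{B}^{\sigma_i}_{p,\infty}}$, $i=1,2$. Fixing an integer $N$ (to be optimized) and splitting the series defining $\|f\|_{\dot{B}^{\sigma}_{p,r}}$ at $j=N$ — bounding the part $j<N$ by the $\sigma_1$-estimate and the part $j\ge N$ by the $\sigma_2$-estimate, so that in each the resulting geometric series converges — one gets, for $r<\infty$,
\begin{equation*}
\|f\|_{\dot{B}^{\sigma}_{p,r}}^{r}\le\frac{2^{Nr(\sigma-\sigma_1)}}{2^{r(\sigma-\sigma_1)}-1}\,\|f\|_{\dot{B}^{\sigma_1}_{p,\infty}}^{r}+\frac{2^{-Nr(\sigma_2-\sigma)}}{1-2^{-r(\sigma_2-\sigma)}}\,\|f\|_{\dot{B}^{\sigma_2}_{p,\infty}}^{r}.
\end{equation*}
I would then take $N$ equal to the integer nearest to $\frac{1}{\sigma_2-\sigma_1}\log_2\bigl(\|f\|_{\dot{B}^{\sigma_2}_{p,\infty}}/\|f\|_{\dot{B}^{\sigma_1}_{p,\infty}}\bigr)$, which balances the two prefactors; since $\frac{\sigma-\sigma_1}{\sigma_2-\sigma_1}=1-\theta$ and $\frac{\sigma_2-\sigma}{\sigma_2-\sigma_1}=\theta$, this choice converts each of $2^{N(\sigma-\sigma_1)}\|f\|_{\dot{B}^{\sigma_1}_{p,\infty}}$ and $2^{-N(\sigma_2-\sigma)}\|f\|_{\dot{B}^{\sigma_2}_{p,\infty}}$ into $\|f\|_{\dot{B}^{\sigma_1}_{p,\infty}}^{\theta}\|f\|_{\dot{B}^{\sigma_2}_{p,\infty}}^{1-\theta}$, up to a bounded factor coming from the integer rounding. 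The case $r=\infty$ is identical with $\sup_j$ in place of the sums.

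The step I expect to be the main obstacle is the sharp bookkeeping of the constant, since the content of the statement lies exactly in the rate at which it degenerates as $\theta\to0,1$ or $\sigma_2-\sigma_1\to0$. The decisive elementary input is the convexity bound $2^{x}-1\ge(\ln2)\,x$ for $x\ge0$ (the tangent line at $x=0$), which yields $\frac{1}{2^{r(\sigma-\sigma_1)}-1}\le\frac{1}{(\ln2)\,r(1-\theta)(\sigma_2-\sigma_1)}$, together with $1-e^{-t}\ge\frac{t}{1+t}$, giving $\frac{1}{1-2^{-r(\sigma_2-\sigma)}}\le1+\frac{1}{(\ln2)\,r\theta(\sigma_2-\sigma_1)}$. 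Inserting these into the displayed inequality, adding the two terms and taking the $r$-th root produces a bound dominated by $\frac{C}{\theta(1-\theta)(\sigma_2-\sigma_1)}\|f\|_{\dot{B}^{\sigma_1}_{p,\infty}}^{\theta}\|f\|_{\dot{B}^{\sigma_2}_{p,\infty}}^{1-\theta}$ (for $r<\infty$ with the extra favourable power $r^{-1/r}\le1$), which is the asserted estimate; pinning down precisely this $\theta$-dependence near the endpoints is the one place I would proceed with care.
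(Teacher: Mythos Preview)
The paper does not supply its own proof of Proposition~\ref{Prop2.2}: it is listed among the ``classical properties'' in Section~2.2 and the reader is referred to \cite{BCD}. There is therefore nothing in the paper to compare against; your argument is exactly the standard elementary derivation found in that reference (H\"older on $\ell^r$ for the first part, geometric splitting and optimization for the second), and it is correct.

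One small remark on the real-interpolation constant. By first fixing an integer $N$ and then rounding the optimal real value $N^*$, you pick up an extra factor of order $2^{\max(\alpha,\beta)/2}$ with $\alpha=(1-\theta)(\sigma_2-\sigma_1)$, $\beta=\theta(\sigma_2-\sigma_1)$, which is harmless when $\sigma_2-\sigma_1$ is bounded (as it always is in the paper's applications) but spoils a constant that is truly universal in $\sigma_2-\sigma_1$. The cleaner variant avoids this: bound each term by $\min\bigl(2^{j(\sigma-\sigma_1)}\|f\|_{\dot B^{\sigma_1}_{p,\infty}},\,2^{j(\sigma-\sigma_2)}\|f\|_{\dot B^{\sigma_2}_{p,\infty}}\bigr)$ and let the crossover index $j^*$ split the sum automatically; then the floor/ceiling only \emph{improves} each piece and one lands directly on $\bigl(\tfrac{1}{1-2^{-\alpha}}+\tfrac{1}{1-2^{-\beta}}\bigr)\|f\|_{\dot B^{\sigma_1}_{p,\infty}}^{\theta}\|f\|_{\dot B^{\sigma_2}_{p,\infty}}^{1-\theta}$ with no rounding loss. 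Either way, the outcome you state is the one the paper needs.
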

The following embedding properties will be used frequently throughout this paper.
\begin{prop} \label{Prop2.3} (Embedding for Besov spaces on $\mathbb{R}^{d}$)
\begin{itemize}
\item For any $p\in[1,\infty]$ we have the  continuous embedding
$\dot {B}^{0}_{p,1}\hookrightarrow L^{p}\hookrightarrow \dot {B}^{0}_{p,\infty}.$
\item If $\sigma\in\mathbb{R}$, $1\leq p_{1}\leq p_{2}\leq\infty$ and $1\leq r_{1}\leq r_{2}\leq\infty,$
then $\dot {B}^{\sigma}_{p_1,r_1}\hookrightarrow
\dot {B}^{\sigma-d\,(\frac{1}{p_{1}}-\frac{1}{p_{2}})}_{p_{2},r_{2}}$.
\item The space $\dot {B}^{\frac {d}{p}}_{p,1}$ is continuously embedded in the set  of
bounded  continuous functions (going to zero at infinity if, additionally, $p<\infty$).
\end{itemize}
\end{prop}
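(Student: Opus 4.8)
The plan is to reduce all three items to two elementary ingredients: the almost-orthogonal decomposition $f=\sum_{j}\dot{\Delta}_{j}f$ together with the triangle inequality in $\ell^{1}$ and $\ell^{\infty}$, and \emph{Bernstein's inequality}, which controls how the $L^{p}$ norm of a frequency-localized function grows when one enlarges the integrability exponent. I will use throughout that for $g$ with $\mathrm{Supp}\,\mathcal{F}g\subset\{\xi:|\xi|\approx 2^{j}\}$ one has $\|g\|_{L^{p_{2}}}\lesssim 2^{jd(\frac{1}{p_{1}}-\frac{1}{p_{2}})}\|g\|_{L^{p_{1}}}$ whenever $1\le p_{1}\le p_{2}\le\infty$; this is the workhorse for the second item.

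For the first item I would argue as follows. Writing $f=\sum_{j}\dot{\Delta}_{j}f$ in $S'_{0}$ and applying the triangle inequality directly gives $\|f\|_{L^{p}}\le\sum_{j\in\mathbb{Z}}\|\dot{\Delta}_{j}f\|_{L^{p}}=\|f\|_{\dot{B}^{0}_{p,1}}$, which is the left embedding. For the right one, I would use that $\dot{\Delta}_{j}f=2^{jd}h(2^{j}\cdot)\star f$ with $h=\mathcal{F}^{-1}\varphi\in L^{1}$, so that Young's convolution inequality yields $\|\dot{\Delta}_{j}f\|_{L^{p}}\le\|h\|_{L^{1}}\|f\|_{L^{p}}$ uniformly in $j$; taking the supremum over $j$ produces $\|f\|_{\dot{B}^{0}_{p,\infty}}\lesssim\|f\|_{L^{p}}$.

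For the second item the plan is to apply Bernstein's inequality blockwise. Setting $\sigma'\triangleq\sigma-d(\frac{1}{p_{1}}-\frac{1}{p_{2}})$, one has
\[
2^{j\sigma'}\|\dot{\Delta}_{j}f\|_{L^{p_{2}}}\lesssim 2^{j\sigma'}2^{jd(\frac{1}{p_{1}}-\frac{1}{p_{2}})}\|\dot{\Delta}_{j}f\|_{L^{p_{1}}}=2^{j\sigma}\|\dot{\Delta}_{j}f\|_{L^{p_{1}}},
\]
and I would then take the $\ell^{r_{2}}(\mathbb{Z})$ norm of both sides and invoke the elementary embedding $\ell^{r_{1}}\hookrightarrow\ell^{r_{2}}$ (valid since $r_{1}\le r_{2}$) to bound the right-hand side by $\|f\|_{\dot{B}^{\sigma}_{p_{1},r_{1}}}$, which is precisely the stated inclusion.

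The third item I would obtain by chaining. Choosing $p_{1}=p$, $p_{2}=\infty$, $r_{1}=r_{2}=1$ in the second item gives $\dot{B}^{d/p}_{p,1}\hookrightarrow\dot{B}^{0}_{\infty,1}$, and the first item gives $\dot{B}^{0}_{\infty,1}\hookrightarrow L^{\infty}$, so every $f\in\dot{B}^{d/p}_{p,1}$ is bounded; since each $\dot{\Delta}_{j}f$ is smooth and $\sum_{j}\dot{\Delta}_{j}f$ converges absolutely in $L^{\infty}$ (its $L^{\infty}$-summed norm being exactly $\|f\|_{\dot{B}^{0}_{\infty,1}}$), the partial sums converge uniformly and the limit $f$ is continuous. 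The step I expect to demand the most care is the decay at infinity when $p<\infty$: here I would first show that each block $\dot{\Delta}_{j}f$ belongs to $C_{0}$, using that a uniformly continuous function lying in $L^{p}$ with $p<\infty$ must vanish at infinity, and then conclude $f\in C_{0}$ from the fact that $C_{0}$ is closed under uniform limits while the series converges uniformly.
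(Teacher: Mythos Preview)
Your argument is correct and is essentially the standard textbook proof of these embeddings. The paper itself does not prove Proposition~\ref{Prop2.3}; it is stated as a classical property recalled from \cite{BCD} (see the sentence preceding Proposition~\ref{Prop2.1}), so there is no in-paper proof to compare against. Your approach---Young's inequality for the $L^{p}\hookrightarrow\dot B^{0}_{p,\infty}$ direction, blockwise Bernstein plus $\ell^{r_{1}}\hookrightarrow\ell^{r_{2}}$ for the second item, and chaining through $\dot B^{0}_{\infty,1}$ with a uniform-limit argument for the third---is exactly the route taken in Chapter~2 of \cite{BCD}.
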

The product estimate in Besov spaces plays a fundamental role in bounding bilinear terms of \eqref{Eq:1.9} (see for example \cite{BCD,DX,SX1,SX2,XJ}).
\begin{prop}\label{Prop2.4}
Let $\sigma>0$ and $1\leq p,r\leq\infty$. Then $\dot{B}^{\sigma}_{p,r}\cap L^{\infty}$ is an algebra and
\begin{equation*}
\|fg\|_{\dot{B}^{\sigma}_{p,r}}\lesssim \|f\|_{L^{\infty}}\|g\|_{\dot{B}^{\sigma}_{p,r}}+\|g\|_{L^{\infty}}\|f\|_{\dot{B}^{\sigma}_{p,r}}.
\end{equation*}
Let the real numbers $\sigma_{1},$ $\sigma_{2},$ $p_1$  and $p_2$ fulfill
\begin{equation*}
\sigma_{1}+\sigma_{2}>0,\quad \sigma_{1}\leq\frac {d}{p_{1}},\quad\sigma_{2}\leq\frac {d}{p_{2}},\quad
\sigma_{1}\geq\sigma_{2},\quad\frac{1}{p_{1}}+\frac{1}{p_{2}}\leq1.
\end{equation*}
Then we have
\begin{equation*}
\|fg\|_{\dot{B}^{\sigma_{2}}_{q,1}}\lesssim \|f\|_{\dot{B}^{\sigma_{1}}_{p_{1},1}}\|g\|_{\dot{B}^{\sigma_{2}}_{p_{2},1}}\quad\hbox{with}\quad
\frac1{q}=\frac1{p_{1}}+\frac1{p_{2}}-\frac{\sigma_{1}}d.
\end{equation*}
Additionally, for exponents $\sigma>0$ and $1\leq p_{1},p_{2},q\leq\infty$ satisfying
\begin{equation*}
\frac{d}{p_{1}}+\frac{d}{p_{2}}-d\leq \sigma \leq\min\left(\frac {d}{p_{1}},\frac {d}{p_{2}}\right)\quad\hbox{and}\quad \frac{1}{q}=\frac {1}{p_{1}}+\frac {1}{p_{2}}-\frac{\sigma}{d},
\end{equation*}
we have
\begin{equation*}
\|fg\|_{\dot{B}^{-\sigma}_{q,\infty}}\lesssim\|f\|_{\dot{B}^{\sigma}_{p_{1},1}}\|g\|_{\dot{B}^{-\sigma}_{p_{2},\infty}}.
\end{equation*}
\end{prop}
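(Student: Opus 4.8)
The plan is to reduce all three assertions of Proposition~\ref{Prop2.4} to the standard continuity properties of Bony's paraproduct and remainder operators, which themselves follow from Bernstein's inequalities, spectral localization, and Young's inequality for series. I would start from the Bony decomposition
\[
fg=\dot T_f g+\dot T_g f+\dot R(f,g),\qquad \dot T_f g\triangleq\sum_{j}\dot S_{j-1}f\,\dot\Delta_j g,\quad \dot R(f,g)\triangleq\sum_{j}\dot\Delta_j f\,\widetilde{\dot\Delta}_j g,
\]
with $\widetilde{\dot\Delta}_j\triangleq\dot\Delta_{j-1}+\dot\Delta_j+\dot\Delta_{j+1}$. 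The decisive structural fact is that each summand of $\dot T_f g$ has Fourier support in a dyadic \emph{annulus} $\{|\xi|\approx 2^{j}\}$, whereas each summand of $\dot R(f,g)$ has Fourier support only in a \emph{ball} $\{|\xi|\lesssim 2^{j}\}$; this dichotomy dictates which estimates are available (for the remainder one must sum a geometric series in the \emph{regularity} index, which is why positivity of the total regularity appears).

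First I would establish the two building blocks. (i) Paraproduct: for any $s\in\mathbb{R}$ and $1\le p,r\le\infty$, $\|\dot T_f g\|_{\dot B^{s}_{p,r}}\lesssim\|f\|_{L^{\infty}}\|g\|_{\dot B^{s}_{p,r}}$ (and, more generally, $\|\dot T_f g\|_{\dot B^{s-t}_{p,r}}\lesssim\|f\|_{\dot B^{-t}_{\infty,\infty}}\|g\|_{\dot B^{s}_{p,r}}$ for $t>0$), proved by bounding $\|\dot S_{j-1}f\|_{L^\infty}$ by Bernstein and using the almost orthogonality of the annuli. (ii) Remainder: if $s_1+s_2>0$, $\tfrac1{\bar p}\triangleq\tfrac1{p_1}+\tfrac1{p_2}$ and $\tfrac1r=\tfrac1{r_1}+\tfrac1{r_2}$, then $\|\dot R(f,g)\|_{\dot B^{s_1+s_2}_{\bar p,r}}\lesssim\|f\|_{\dot B^{s_1}_{p_1,r_1}}\|g\|_{\dot B^{s_2}_{p_2,r_2}}$, proved by writing $\dot\Delta_{j'}\dot R(f,g)=\sum_{j\ge j'-N_0}\dot\Delta_{j'}\big(\dot\Delta_j f\,\widetilde{\dot\Delta}_j g\big)$, applying Hölder in $L^{\bar p}$, and summing the geometric series in $2^{(j'-j)(s_1+s_2)}$ (standard modifications with quasi-norms if $\bar p<1$). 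With these in hand the algebra estimate is immediate: $\dot T_f g$ and $\dot T_g f$ are controlled via (i), while for $\dot R(f,g)$ the ball localization gives $\|\dot\Delta_{j'}\dot R(f,g)\|_{L^p}\lesssim\|f\|_{L^\infty}\sum_{j\ge j'-N_0}2^{(j'-j)\sigma}\,2^{j\sigma}\|\widetilde{\dot\Delta}_j g\|_{L^p}$, and one concludes by Young's inequality in $\ell^r$, crucially using $\sigma>0$.

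For the second assertion the Lebesgue shift $\tfrac1q=\tfrac1{p_1}+\tfrac1{p_2}-\tfrac{\sigma_1}{d}$ is produced by Bernstein. I would handle $\dot T_f g$ by placing $\dot S_{j-1}f$ in $L^{q_1}$ with $\tfrac1{q_1}=\tfrac1{p_1}-\tfrac{\sigma_1}{d}$ (admissible because $\sigma_1\le d/p_1$), so that $\|\dot S_{j-1}f\|_{L^{q_1}}\lesssim\|f\|_{\dot B^{\sigma_1}_{p_1,1}}$ uniformly in $j$, then Hölder against $\dot\Delta_j g\in L^{p_2}$; for $\dot T_g f$ I would instead keep $\dot\Delta_j f\in L^{p_1}$ and bound $\dot S_{j-1}g$ in the conjugate Lebesgue exponent, using $\sigma_1\ge\sigma_2$ to convert the low-frequency sum into a factor $2^{j(\sigma_1-\sigma_2)}$ that is absorbed against $2^{j\sigma_2}\|\dot\Delta_j g\|_{L^{p_2}}$; for $\dot R(f,g)$ I would invoke building block (ii) to land in $\dot B^{\sigma_1+\sigma_2}_{q,1}$ and then embed into $\dot B^{\sigma_2}_{q,1}$, legitimate since $\sigma_1>0$ (note $\sigma_1+\sigma_2>0$ together with $\sigma_1\ge\sigma_2$ forces $\sigma_1>0$). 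The hypotheses $\tfrac1{p_1}+\tfrac1{p_2}\le1$ and $\sigma_i\le d/p_i$ are exactly what make every Hölder/Bernstein step admissible.

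The third assertion (mapping into $\dot B^{-\sigma}_{q,\infty}$) is, to my mind, the delicate point, and I would do it last. Since $g$ now has \emph{negative} regularity, $\dot S_{j-1}g$ picks up a factor $2^{j\sigma}$, so the paraproduct pieces $\dot T_f g$ and $\dot T_g f$ are treated as above with this factor absorbed against the $2^{-j\sigma}$ coming from the target norm, using $\sigma\le d/p_1$ to choose Hölder exponents and $\sigma>0$ to sum the low-frequency cut-off. The remainder, however, \emph{cannot} be estimated directly in $\dot B^{-\sigma}_{q,\infty}$: pairing $f$ and $g$ at the same high frequency $j$ and projecting onto a low frequency $j'$ produces a factor $2^{j\sigma}$ with no upper cut-off on $j$, which is not summable. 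The fix is to first estimate $\dot R(f,g)$ in $\dot B^{0}_{\bar p,\infty}$ with $\tfrac1{\bar p}=\tfrac1{p_1}+\tfrac1{p_2}$ — there the pairing bound $\|\dot\Delta_j f\,\widetilde{\dot\Delta}_j g\|_{L^{\bar p}}\lesssim\big(2^{j\sigma}\|\dot\Delta_j f\|_{L^{p_1}}\big)\,\|g\|_{\dot B^{-\sigma}_{p_2,\infty}}$ has a summable right-hand side — and then invoke the embedding $\dot B^{0}_{\bar p,\infty}\hookrightarrow\dot B^{-\sigma}_{q,\infty}$ of Proposition~\ref{Prop2.3}, which reads precisely $-\sigma=-d\big(\tfrac1{\bar p}-\tfrac1q\big)$. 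Here the inequality $d/p_1+d/p_2-d\le\sigma$ is used exactly to guarantee $q\ge1$ (equivalently $\bar p\le q$), so the embedding is valid. Assembling the three pieces completes the proof; the single genuine obstacle is the exponent bookkeeping for the remainder in the negative-regularity case, which the auxiliary $\dot B^{0}_{\bar p,\infty}$ step resolves.
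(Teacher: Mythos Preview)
Your proof sketch is correct and follows the standard route via Bony's paraproduct decomposition; this is precisely the argument one finds in the references the paper cites (in particular \cite{BCD,DX}). Note, however, that the paper itself does \emph{not} prove Proposition~\ref{Prop2.4}: it is stated as a known tool with a pointer to the literature, so there is no ``paper's own proof'' to compare against. Your treatment of the three pieces---paraproduct continuity, the remainder bound requiring $\sigma_1+\sigma_2>0$, and the negative-regularity case handled by first landing $\dot R(f,g)$ in $\dot B^{0}_{\bar p,\infty}$ and then embedding---matches the standard presentation. One small imprecision: in the second assertion you write that the remainder lands in $\dot B^{\sigma_1+\sigma_2}_{q,1}$ and then embeds into $\dot B^{\sigma_2}_{q,1}$; what actually happens is that building block (ii) puts $\dot R(f,g)$ in $\dot B^{\sigma_1+\sigma_2}_{\bar p,1}$ with $\tfrac{1}{\bar p}=\tfrac{1}{p_1}+\tfrac{1}{p_2}$, and the Sobolev-type embedding of Proposition~\ref{Prop2.3} (not a monotone regularity embedding, which fails for homogeneous spaces) then yields $\dot B^{\sigma_1+\sigma_2}_{\bar p,1}\hookrightarrow\dot B^{\sigma_2}_{q,1}$ since $\sigma_1=d\big(\tfrac{1}{\bar p}-\tfrac{1}{q}\big)$. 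With that correction the argument is complete.
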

Proposition \ref{Prop2.4} is not enough to bound the low frequency part of some nonlinear terms in the proof of Theorem
\ref{Thm1.2}, so we need to the following non-classical product estimate (see \cite{SX1,SX2,DX,XJ}).
\begin{prop}\label{Prop2.5} Let $j_{0}\in\Z,$ and denote $z^{\ell}\triangleq\dot S_{j_{0}}z$, $z^{h}\triangleq z-z^{\ell}$ and, for any $\sigma\in\mathbb{R}$,
\begin{equation*}
\|z\|_{\dot B^{\sigma}_{2,\infty}}^{\ell}\triangleq\sup_{j\leq j_{0}}2^{j\sigma} \|\dot{\Delta}_{j}z\|_{L^2}.
\end{equation*}
There exists a universal integer $N_{0}$ suc that  for any $2\leq p\leq 4$ and $\sigma>0$, we have
\begin{eqnarray}\label{Eq:2.7}
&&\|f g^{h}\|_{\dot {B}^{-s_{0}}_{2,\infty}}^{\ell}\leq C \big(\|f\|_{\dot {B}^{\sigma}_{p,1}}+\|\dot S_{j_{0}+N_{0}}f\|_{L^{{p}^{*}}}\big)\|g^{h}\|_{\dot{B}^{-\sigma}_{p,\infty}},\\
\label{Eq:2.8}
&&\|f^{h} g\|_{\dot {B}^{-s_{0}}_{2,\infty}}^{\ell}
\leq C \big(\|f^{h}\|_{\dot{B}^{\sigma}_{p,1}}+\|\dot{S}_{j_{0}+N_{0}}f^{h}\|_{L^{p^{*}}}\big)\|g\|_{\dot {B}^{-\sigma}_{p,\infty}}
\end{eqnarray}
with  $s_{0}\triangleq \frac{2d}{p}-\frac {d}{2}$ and $\frac1{p^{*}}\triangleq\frac{1}{2}-\frac{1}{p},$
and $C$ depending only on $j_{0}$, $d$ and $\sigma$.
\end{prop}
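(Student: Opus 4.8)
\emph{Proof strategy.} The plan is to feed Bony's decomposition $uv=T_uv+T_vu+R(u,v)$ into the two products and to exploit that a low‑frequency block $\dot\Delta_j$ with $j\le j_0$ applied to a product one of whose factors is spectrally supported in high frequencies ($g^h$, resp. $f^h$) is strongly localized. Throughout, write $\|\dot\Delta_ku\|_{L^p}=c_k2^{-k\sigma}\|u\|_{\dot B^\sigma_{p,1}}$ with $(c_k)_k$ a generic nonnegative sequence with $\sum_kc_k=1$, set $\widetilde{\dot\Delta}_k=\dot\Delta_{k-1}+\dot\Delta_k+\dot\Delta_{k+1}$, and recall that $s_0=d\big(\frac2p-\frac12\big)$ and $\frac1{p^*}=\frac12-\frac1p$, so that $0\le s_0$, $p\le p^*$ and $d\big(\frac1p-\frac1{p^*}\big)=s_0$ precisely when $2\le p\le4$.

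\emph{Step 1: the remainder.} Each summand of $R(f,g^h)=\sum_k\dot\Delta_kf\,\widetilde{\dot\Delta}_kg^h$ has spectrum in a ball of radius $\lesssim2^k$, so for any $j$ only indices $k\ge j-N_1$ survive $\dot\Delta_j$. Since $p\le4$, Bernstein gives $\|\dot\Delta_j(\dot\Delta_kf\,\widetilde{\dot\Delta}_kg^h)\|_{L^2}\lesssim2^{jd(\frac2p-\frac12)}\|\dot\Delta_kf\|_{L^p}\|\widetilde{\dot\Delta}_kg^h\|_{L^p}$, the exponent being exactly $s_0$. Using $\|\widetilde{\dot\Delta}_kg^h\|_{L^p}\lesssim2^{k\sigma}\|g^h\|_{\dot B^{-\sigma}_{p,\infty}}$, the powers $2^{\pm k\sigma}$ cancel and $\sum_{k\ge j-N_1}c_k\le1$ yields $2^{-js_0}\|\dot\Delta_jR(f,g^h)\|_{L^2}\lesssim\|f\|_{\dot B^\sigma_{p,1}}\|g^h\|_{\dot B^{-\sigma}_{p,\infty}}$, uniformly in $j$ (the borderline pairing $\sigma+(-\sigma)=0$ being rescued by the $\ell^1$ summability built into $\dot B^\sigma_{p,1}$); likewise $R(f^h,g)$ is controlled by $\|f^h\|_{\dot B^\sigma_{p,1}}\|g\|_{\dot B^{-\sigma}_{p,\infty}}$.

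\emph{Step 2: the paraproducts.} Each summand of $T_fg^h=\sum_k\dot S_{k-1}f\,\dot\Delta_kg^h$ and of $T_{g^h}f=\sum_k\dot S_{k-1}g^h\,\dot\Delta_kf$ has spectrum in an annulus $\sim2^k$, so $\dot\Delta_j$ only picks $|j-k|\le N_2$; moreover $\dot\Delta_kg^h$, resp. $\dot S_{k-1}g^h$, vanishes unless $k\ge j_0-N_3$, which together with $j\le j_0$ confines both $j$ and $k$ to the band $[\,j_0-N_0,\,j_0+N_2\,]$ for a universal integer $N_0$; on this band $2^{\pm js_0}$ and $2^{k\sigma}$ are comparable to constants depending only on $j_0,d,\sigma$. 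For the ``good'' term $T_{g^h}f$ one has $\|\dot S_{k-1}g^h\|_{L^p}\lesssim\sum_{k'\le k-2}2^{k'\sigma}\|g^h\|_{\dot B^{-\sigma}_{p,\infty}}\lesssim2^{k\sigma}\|g^h\|_{\dot B^{-\sigma}_{p,\infty}}$ since $\sigma>0$, and multiplying by $\|\dot\Delta_kf\|_{L^p}=c_k2^{-k\sigma}\|f\|_{\dot B^\sigma_{p,1}}$ the powers of $2^{k\sigma}$ cancel, giving the $\|f\|_{\dot B^\sigma_{p,1}}\|g^h\|_{\dot B^{-\sigma}_{p,\infty}}$ contribution. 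For the ``bad'' term $T_fg^h$ one instead uses $L^{p^*}\cdot L^p\hookrightarrow L^2$; since $k\le j_0+N_2$ one may write $\dot S_{k-1}f=\dot S_{k-1}\big(\dot S_{j_0+N_0}f\big)$ and invoke the uniform $L^{p^*}$‑boundedness of the low‑frequency cut‑offs to get $\|\dot S_{k-1}f\|_{L^{p^*}}\lesssim\|\dot S_{j_0+N_0}f\|_{L^{p^*}}$, and combined with $\|\dot\Delta_kg^h\|_{L^p}\le2^{k\sigma}\|g^h\|_{\dot B^{-\sigma}_{p,\infty}}$ this gives the $\|\dot S_{j_0+N_0}f\|_{L^{p^*}}\|g^h\|_{\dot B^{-\sigma}_{p,\infty}}$ contribution. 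Adding the three pieces and taking $\sup_{j\le j_0}2^{js_0}\|\dot\Delta_j(fg^h)\|_{L^2}$ gives \eqref{Eq:2.7}. Estimate \eqref{Eq:2.8} follows the same pattern with the roles redistributed so that $T_{f^h}g$ is the ``bad'' term (carrying $\|\dot S_{j_0+N_0}f^h\|_{L^{p^*}}$), while $T_gf^h$—handled via the block‑wise Bernstein bound $\|\dot\Delta_{k'}g\|_{L^{p^*}}\lesssim2^{k's_0}\|\dot\Delta_{k'}g\|_{L^p}$ (again $p\le4$) followed by a geometric sum in $k'$—and $R(f^h,g)$ both carry $\|f^h\|_{\dot B^\sigma_{p,1}}$.

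\emph{Main obstacle.} The delicate point is the frequency bookkeeping: one must carefully track the three independent support constraints—annulus/ball localization of each summand, the $\widetilde{\dot\Delta}_k$ overlap in the remainder, and $k\gtrsim j_0$ forced by the presence of $g^h$ or $f^h$—in order to pin down the universal integer $N_0$, and then verify that the exponent identities $s_0=d\big(\frac2p-\frac12\big)=d\big(\frac1p-\frac1{p^*}\big)$ with $\frac1{p^*}=\frac12-\frac1p$ are exactly what the Bernstein inequalities and the Hölder pairing $L^{p^*}\cdot L^p\subset L^2$ demand, which is precisely where the hypothesis $2\le p\le4$ is used.
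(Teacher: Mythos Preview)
Your argument is correct and follows the standard route: Bony's paraproduct decomposition, Bernstein from $L^{p/2}$ to $L^2$ (using $p\le4$) for the remainder, and the key observation that the presence of a high-frequency factor confines the paraproduct sums to a finite band near $j_0$, on which the ``bad'' paraproduct is handled by the H\"older pairing $L^{p^*}\cdot L^p\hookrightarrow L^2$ together with $\dot S_{k-1}f=\dot S_{k-1}(\dot S_{j_0+N_0}f)$. This is precisely the mechanism behind the estimate.

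Note, however, that the paper does not actually prove Proposition~2.5: it is stated with the parenthetical ``(see \cite{SX1,SX2,DX,XJ})'' and no argument is given in the text. The proof in those references (in particular \cite{DX} and \cite{XJ}) proceeds exactly along the lines you describe, so your proposal is in agreement with the intended argument even though there is nothing in the present paper to compare it to directly.

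One minor presentational point: in your treatment of $T_{g^h}f$ you multiply two $L^p$ norms but do not make the passage to $L^2$ explicit; it is implicit in your earlier remark that $2^{\pm js_0}$ is harmless on the finite band (equivalently, Bernstein $L^{p/2}\to L^2$ contributes the factor $2^{js_0}$ that is then absorbed). Making that step explicit would tighten the write-up, but the logic is sound.
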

System \eqref{Eq:1.9} also involves compositions of functions (through $I(a)$, $k(a)$, $\tilde{\lambda}(a)$ and $\tilde{\mu}(a)$) that
are handled according to the following conclusion.
\begin{prop}\label{Prop2.6}
Let $F:\mathbb{R}\rightarrow\mathbb{R}$ be  smooth with $F(0)=0$.
For all  $1\leq p,r\leq\infty$ and $\sigma>0$ we have
$F(f)\in \dot {B}^{\sigma}_{p,r}\cap L^{\infty}$  for  $f\in \dot {B}^{\sigma}_{p,r}\cap L^{\infty},$  and
\begin{equation*}
\|F(f)\|_{\dot B^\sigma_{p,r}}\leq C\|f\|_{\dot B^\sigma_{p,r}}
\end{equation*}
with $C$ depending only on $\|f\|_{L^{\infty}}$, $F'$ \emph{(}and higher derivatives\emph{)}, $\sigma$, $p$ and $d$.

In the case $\sigma>-\min\big(\frac {d}{p},\frac {d}{p'}\big)$ then $f\in\dot {B}^{\sigma}_{p,r}\cap\dot {B}^{\frac {d}{p}}_{p,1}$
implies that $F(f)\in \dot {B}^{\sigma}_{p,r}\cap\dot {B}^{\frac {d}{p}}_{p,1}$, and we have
$$\|F(f)\|_{\dot B^{\sigma}_{p,r}}\leq C(1+\|f\|_{\dot {B}^{\frac {d}{p}}_{p,1}})\|f\|_{\dot {B}^{\sigma}_{p,r}}.$$
\end{prop}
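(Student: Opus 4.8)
The plan is to prove Proposition \ref{Prop2.6} by Meyer's first linearization method (see \cite[Chapter~2]{BCD}), the two cases differing only in how the resulting dyadic series is summed. For the $L^\infty$ bound, since $F(0)=0$ the mean value theorem gives $|F(f)(x)|\le\big(\sup_{|y|\le\|f\|_{L^\infty}}|F'(y)|\big)|f(x)|$, so $F(f)\in L^\infty$ with norm controlled by $\|f\|_{L^\infty}$ (and, by a routine argument, $F(f)\in S'_0$ so that its homogeneous Besov norm makes sense). For the case $\sigma>0$ I would use the telescoping identity $F(f)=\sum_{j\in\mathbb{Z}}\big(F(\dot{S}_{j+1}f)-F(\dot{S}_j f)\big)=\sum_{j\in\mathbb{Z}}\dot{\Delta}_j f\cdot M_j$, where $M_j\triangleq\int_0^1 F'(\dot{S}_j f+\tau\dot{\Delta}_j f)\,d\tau$. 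By the chain rule, Fa\`a di Bruno's formula and Bernstein's inequality ($\|\nabla^n\dot{S}_j f\|_{L^\infty},\ \|\nabla^n\dot{\Delta}_j f\|_{L^\infty}\lesssim2^{jn}\|f\|_{L^\infty}$), the multipliers satisfy the scale-dependent bounds $\|M_j\|_{L^\infty}\lesssim1$ and $\|\nabla^n M_j\|_{L^\infty}\lesssim2^{jn}$ for every $n\ge1$, with constants depending only on $\|f\|_{L^\infty}$ and the derivatives of $F$. Localizing with $\dot{\Delta}_q$: for $q\le j$ one bounds $\|\dot{\Delta}_q(\dot{\Delta}_j f\,M_j)\|_{L^p}\lesssim\|\dot{\Delta}_j f\|_{L^p}$ trivially, while for $q>j$ one writes $\dot{\Delta}_q=2^{-2qN}\widetilde{\varphi}(2^{-q}D)(-\Delta)^N$ with $\widetilde{\varphi}(\eta)=\varphi(\eta)|\eta|^{-2N}$ smooth and annulus-supported, and distributes the derivatives by Leibniz, using the bounds on $M_j$ and on $\dot{\Delta}_j f$, to get $\|\dot{\Delta}_q(\dot{\Delta}_j f\,M_j)\|_{L^p}\lesssim2^{2N(j-q)}\|\dot{\Delta}_j f\|_{L^p}$ for $N$ as large as we please. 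Multiplying by $2^{q\sigma}$, summing over $j$ and applying Young's inequality on $\ell^r(\mathbb{Z})$ then yields $\|F(f)\|_{\dot{B}^\sigma_{p,r}}\lesssim\|f\|_{\dot{B}^\sigma_{p,r}}$; convergence of the two geometric factors requires $0<\sigma<2N$, which is no restriction since $N$ is free.

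For the range $\sigma>-\min(d/p,d/p')$ the same series cannot be summed directly, because the factor $2^{(q-j)\sigma}$ is no longer summable over $j\le q$ when $\sigma\le0$, so I would para-linearize. Splitting $M_j=F'(\dot{S}_j f)+\big(M_j-F'(\dot{S}_j f)\big)$ and using a second-order Taylor expansion to see that the bracket is $O(|\dot{\Delta}_j f|)$ uniformly, one writes $F(f)$ as a paraproduct-type term $\sum_j\dot{\Delta}_j f\cdot F'(\dot{S}_j f)$, which up to standard remainders is the paraproduct $\dot{T}_{F'(f)}f$, plus a quadratic term $\sum_j\dot{\Delta}_j f\cdot\big(M_j-F'(\dot{S}_j f)\big)$ that is morally $O(|\dot{\Delta}_j f|^2)$. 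The paraproduct term is controlled by the continuity $\dot{T}:L^\infty\times\dot{B}^\sigma_{p,r}\to\dot{B}^\sigma_{p,r}$, valid for every $\sigma\in\mathbb{R}$, after placing the coefficient in $\dot{B}^{\frac dp}_{p,1}\hookrightarrow L^\infty$: writing $F'(f)=F'(0)+\big(F'(f)-F'(0)\big)$ and invoking the first part of the proposition applied to $F'(\cdot)-F'(0)$, which is exactly where the hypothesis $f\in\dot{B}^{\frac dp}_{p,1}$ and the factor $1+\|f\|_{\dot{B}^{\frac dp}_{p,1}}$ enter. The quadratic term is estimated with the product laws of Proposition \ref{Prop2.4}, writing it as a product of a factor controlled in $\dot{B}^{\frac dp}_{p,1}$ and a factor controlled in $\dot{B}^\sigma_{p,r}$; the threshold $\sigma>-\min(d/p,d/p')$ is precisely the condition $\sigma_1+\sigma_2>0$ (with $\sigma_1=d/p$) required for that product estimate, and summing in $j$ while keeping the dependence linear in $\|f\|_{\dot{B}^\sigma_{p,r}}$ produces the claimed bound.

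The main obstacle I anticipate is the bookkeeping in the second part: one must organize the decomposition so that the final estimate is \emph{linear} in $\|f\|_{\dot{B}^\sigma_{p,r}}$, which forces every superfluous copy of $f$ to be absorbed into an $L^\infty$- or $\dot{B}^{\frac dp}_{p,1}$-norm (via the first part and the embedding $\dot{B}^{\frac dp}_{p,1}\hookrightarrow L^\infty$), and one must verify that each product appearing in the paraproduct and remainder pieces meets the hypotheses $\sigma_1+\sigma_2>0$, $\sigma_i\le d/p_i$ of Proposition \ref{Prop2.4} — the precise form of these constraints being what pins down the admissible range of $\sigma$. Everything else is routine once the scale-dependent multiplier bounds on $M_j$ are in hand.
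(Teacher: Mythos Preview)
The paper does not prove Proposition~\ref{Prop2.6}; it is listed in Section~2 as a preliminary tool taken from the literature (essentially \cite{BCD}, Chapter~2), so there is no in-paper proof to compare against.

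Your argument for the first part ($\sigma>0$) via Meyer's first linearization is correct and is the standard proof. For the second part your paralinearization route is viable in principle, but there is a much shorter path that also removes the one underspecified step in your outline. Write $F(f)=F'(0)f+\widetilde F(f)$ with $\widetilde F(0)=\widetilde F'(0)=0$, so that $\widetilde F(f)=f\,G(f)$ for a smooth $G$ with $G(0)=0$. The first part of the proposition (applied with exponent $d/p>0$) gives $\|G(f)\|_{\dot B^{d/p}_{p,1}}\lesssim\|f\|_{\dot B^{d/p}_{p,1}}$, and a single use of the product law $\dot B^{d/p}_{p,1}\times\dot B^{\sigma}_{p,r}\to\dot B^{\sigma}_{p,r}$ (valid precisely for $\sigma>-\min(d/p,d/p')$; cf.\ Proposition~\ref{Prop2.4}) yields $\|f\,G(f)\|_{\dot B^{\sigma}_{p,r}}\lesssim\|f\|_{\dot B^{d/p}_{p,1}}\|f\|_{\dot B^{\sigma}_{p,r}}$. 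Adding the linear piece $F'(0)f$ produces exactly the factor $1+\|f\|_{\dot B^{d/p}_{p,1}}$.

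The gap in your version is the sentence ``the quadratic term is estimated with the product laws\ldots writing it as a product of a factor controlled in $\dot B^{d/p}_{p,1}$ and a factor controlled in $\dot B^{\sigma}_{p,r}$'': each summand $\dot\Delta_j f\cdot\big(M_j-F'(\dot S_j f)\big)$ is such a product, but you still have an infinite sum over $j$, and Proposition~\ref{Prop2.4} applied termwise only gives a bound by $\sum_j\|\dot\Delta_j f\|_{\dot B^{d/p}_{p,1}}\|\dot\Delta_j f\|_{\dot B^{\sigma}_{p,r}}$, which is not obviously $\lesssim\|f\|_{\dot B^{d/p}_{p,1}}\|f\|_{\dot B^{\sigma}_{p,r}}$ without an additional almost-orthogonality argument you have not supplied. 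The one-line factorization above sidesteps this entirely.
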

%*****************************

\section{Low-frequency and high-frequency analysis} \setcounter{equation}{0}
Let us establish a Lyapunov-type inequality for energy norms by means of a pure energy approach. For clarity, the proof is divided into two steps. In this section, we focus on the low-frequency and high-frequency estimates.
\subsection{Low-frequency estimates}
Let $\Lambda^{s}z\triangleq\mathcal{F}^{-1}\big(|\xi|^{s}\mathcal{F}z\big)$ ($s\in\mathbb{R}$). Denote by $\Omega=\Lambda^{-1}\mathrm{curl}\,u$ the incompressible part of $u$ and by $\omega=\Lambda^{-1}\mathrm{div}\,u$ the compressible part of $u$. Then system \eqref{Eq:1.9} becomes
\begin{equation*}
\left\{
\begin{array}{l}
\partial _{t}a +\Lambda \omega=f, \\ [1mm]
\partial _{t}\omega-\Delta \omega-\Lambda a-\Lambda^{-1}a=h, \\ [1mm]
\partial _{t}\Omega-\mu_{\infty}\Delta \Omega=m, \\ [1mm]
u=-\Lambda^{-1}\nabla\omega+\Lambda^{-1}\mathrm{div}\,\Omega
\end{array}
\right.
\end{equation*}
with $h=\Lambda^{-1}\mathrm{div}\,g$ and $m=\Lambda^{-1}\mathrm{curl}\,g$.

Observe that the incompressible component $\Omega$ satisfies
\begin{equation*}
\partial _{t}\Omega-\mu_{\infty}\Delta \Omega=m.
\end{equation*}
It is not difficult to infer that
\begin{equation}\label{Eq:3.1}
\frac{d}{dt}\|\Omega^{\ell}\|_{\dot{B}^{\frac{d}{p}-1}_{p,1}}+\|\Omega^{\ell}\|_{\dot{B}^{\frac{d}{p}+1}_{p,1}}\lesssim \|g^{\ell}\|_{\dot{B}^{\frac{d}{p}-1}_{p,1}}
\end{equation}
for $1\leq p \leq \infty$ and $t\geq 0$, where $z^{\ell}\triangleq \dot{S}_{j_{0}}z$.
As pointed out in Introduction, at low frequencies, it
is natural to consider the following system
\begin{equation}\label{Eq:3.2}
\left\{
\begin{array}{l}
\partial _{t}\tilde{a}+\omega=\Lambda^{-1}f, \\ [1mm]
\partial _{t}\omega-\Delta \omega-(\Lambda^{2}+1) \tilde{a}=h
\end{array}
\right.
\end{equation}
with $\tilde{a}=\Lambda^{-1}a$.
\begin{lem}\label{Lem3.1}
let $j_{0}$ be some integer. Then it holds that for all $t\geq 0$
\begin{equation}\label{Eq:3.3}
\frac{d}{dt}\|(\tilde{a},u)^{\ell}\|_{\dot{B}^{\frac{d}{2}-1}_{2,1}}+\|(\tilde{a},u)^{\ell}\|_{\dot{B}^{\frac{d}{2}+1}_{2,1}}\lesssim \|(\Lambda^{-1}f^{\ell},g^{\ell})\|_{\dot{B}^{\frac{d}{2}-1}_{2,1}}.
\end{equation}
\end{lem}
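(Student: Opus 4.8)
\emph{Proof strategy.}
The plan is to treat separately the incompressible component $\Omega^{\ell}$ and the acoustic pair $(\tilde a,\omega)^{\ell}$, and to recombine them through $u=-\Lambda^{-1}\nabla\omega+\Lambda^{-1}\mathrm{div}\,\Omega$. For $\Omega^{\ell}$ nothing new is needed: \eqref{Eq:3.1} with $p=2$ gives the heat-type estimate directly. Since $\omega=\Lambda^{-1}\mathrm{div}\,u$, $\Omega=\Lambda^{-1}\mathrm{curl}\,u$ and $h=\Lambda^{-1}\mathrm{div}\,g$ are obtained from $u$ and $g$ by degree-$0$ homogeneous Fourier multipliers that commute with the Littlewood--Paley truncations, Proposition \ref{Prop2.1} yields the equivalence $\|u\|_{\dot B^{s}_{2,1}}\approx\|\omega\|_{\dot B^{s}_{2,1}}+\|\Omega\|_{\dot B^{s}_{2,1}}$ and the bound $\|h\|_{\dot B^{s}_{2,1}}\lesssim\|g\|_{\dot B^{s}_{2,1}}$, both compatible with the low-frequency cut-off. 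Thus \eqref{Eq:3.3} reduces to proving
\[
\frac{d}{dt}\|(\tilde a,\omega)^{\ell}\|_{\dot{B}^{\frac{d}{2}-1}_{2,1}}+\|(\tilde a,\omega)^{\ell}\|_{\dot{B}^{\frac{d}{2}+1}_{2,1}}\lesssim \|(\Lambda^{-1}f^{\ell},h^{\ell})\|_{\dot{B}^{\frac{d}{2}-1}_{2,1}},
\]
and then adding this to the $\Omega^{\ell}$ inequality.

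This acoustic estimate is the analytic core, and I would obtain it by a frequency-localized Lyapunov functional applied to \eqref{Eq:3.2}, which has \emph{constant coefficients} (all quasilinear terms sit in $f,h$), so no commutator terms arise. Apply $\dot\Delta_{j}$ with $j\le j_{0}$ and write $\tilde a_{j}=\dot\Delta_{j}\tilde a$, $\omega_{j}=\dot\Delta_{j}\omega$. Taking the $L^{2}$ scalar product of the localized equations with $(\Lambda^{2}+1)\tilde a_{j}$ and with $\omega_{j}$ respectively and adding, the coupling terms cancel and one obtains the parabolic dissipation $\|\Lambda\omega_{j}\|_{L^{2}}^{2}\approx 2^{2j}\|\omega_{j}\|_{L^{2}}^{2}$ but no damping of $\tilde a_{j}$; differentiating the cross product $(\tilde a_{j},\omega_{j})_{L^{2}}$ in time produces $+\|(\Lambda^{2}+1)^{1/2}\tilde a_{j}\|_{L^{2}}^{2}\approx\|\tilde a_{j}\|_{L^{2}}^{2}$ (together with harmless terms $-\|\omega_{j}\|_{L^{2}}^{2}$, $-(\Lambda\tilde a_{j},\Lambda\omega_{j})_{L^{2}}$ and source contributions). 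Since $\Lambda^{2}+1\approx1$ and $|\xi|\approx2^{j}$ on the support of $\dot\Delta_{j}$ for $j\le j_{0}$, the correct normalization is
\[
\mathcal{L}_{j}^{2}\triangleq\|\omega_{j}\|_{L^{2}}^{2}+\|(\Lambda^{2}+1)^{1/2}\tilde a_{j}\|_{L^{2}}^{2}-2\eta\,2^{2j}(\tilde a_{j},\omega_{j})_{L^{2}},
\]
with $\eta>0$ small, depending only on the fixed integer $j_{0}$. For such $\eta$ one checks that $\mathcal{L}_{j}^{2}\approx\|\tilde a_{j}\|_{L^{2}}^{2}+\|\omega_{j}\|_{L^{2}}^{2}$ uniformly in $j\le j_{0}$, that $\|\Lambda\omega_{j}\|_{L^{2}}^{2}-\eta\,2^{2j}\|\omega_{j}\|_{L^{2}}^{2}\gtrsim 2^{2j}\|\omega_{j}\|_{L^{2}}^{2}$, and that the surviving indefinite term $\eta\,2^{2j}(\Lambda\tilde a_{j},\Lambda\omega_{j})_{L^{2}}=O\big(\eta\,2^{4j}\|\tilde a_{j}\|_{L^{2}}\|\omega_{j}\|_{L^{2}}\big)$ — controlled after a Young split because $2^{4j}\le2^{4j_{0}}$ — as well as the source terms are absorbed by the two dissipations, giving $\tfrac12\tfrac{d}{dt}\mathcal{L}_{j}^{2}+c\,2^{2j}\mathcal{L}_{j}^{2}\lesssim\mathcal{L}_{j}\big(\|\dot\Delta_{j}\Lambda^{-1}f\|_{L^{2}}+\|\dot\Delta_{j}h\|_{L^{2}}\big)$, hence, after dividing by $\mathcal{L}_{j}$ (with the usual regularization $\sqrt{\mathcal{L}_{j}^{2}+\varepsilon^{2}}$ to justify it),
\[
\frac{d}{dt}\mathcal{L}_{j}+c\,2^{2j}\mathcal{L}_{j}\lesssim\|\dot\Delta_{j}\Lambda^{-1}f\|_{L^{2}}+\|\dot\Delta_{j}h\|_{L^{2}}.
\]

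Finally I would multiply by $2^{j(\frac d2-1)}$ and sum over $j\le j_{0}$. Using $2^{2j}\cdot2^{j(\frac d2-1)}=2^{j(\frac d2+1)}$ and $\mathcal{L}_{j}\approx\|\tilde a_{j}\|_{L^{2}}+\|\omega_{j}\|_{L^{2}}$, the functional $\mathcal{X}^{\ell}(t)\triangleq\sum_{j\le j_{0}}2^{j(\frac d2-1)}\mathcal{L}_{j}(t)$ is equivalent to $\|(\tilde a,\omega)^{\ell}\|_{\dot B^{\frac d2-1}_{2,1}}$ and satisfies $\frac{d}{dt}\mathcal{X}^{\ell}+c\|(\tilde a,\omega)^{\ell}\|_{\dot B^{\frac d2+1}_{2,1}}\lesssim\|(\Lambda^{-1}f^{\ell},h^{\ell})\|_{\dot B^{\frac d2-1}_{2,1}}$, which is the acoustic estimate above (the identity \eqref{Eq:3.3} being understood, as customary in this circle of ideas, with $\|(\tilde a,u)^{\ell}\|_{\dot B^{\frac d2-1}_{2,1}}$ replaced by an equivalent Lyapunov functional). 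Adding \eqref{Eq:3.1} with $p=2$ and then using $\|h^{\ell}\|_{\dot B^{\frac d2-1}_{2,1}}\lesssim\|g^{\ell}\|_{\dot B^{\frac d2-1}_{2,1}}$ together with $\|u\|_{\dot B^{s}_{2,1}}\approx\|\omega\|_{\dot B^{s}_{2,1}}+\|\Omega\|_{\dot B^{s}_{2,1}}$ completes the proof. The main obstacle is exactly this calibration: tuning the weight $2^{2j}$ on the cross term and the smallness of $\eta$ so that the low-frequency antidamping produced by the cross term and the sign-indefinite term $\eta\,2^{2j}(\Lambda\tilde a_{j},\Lambda\omega_{j})_{L^{2}}$ are \emph{simultaneously} dominated by the parabolic dissipation $2^{2j}\mathcal{L}_{j}^{2}$ — which is precisely where the restriction to $j\le j_{0}$ (keeping $\Lambda^{2}+1$ and $2^{2j}$ bounded) is used.
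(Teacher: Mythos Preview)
Your proposal is correct and follows the same overall strategy as the paper: split off the incompressible part $\Omega^{\ell}$ via \eqref{Eq:3.1} with $p=2$, build a frequency-localized Lyapunov functional for the acoustic pair $(\tilde a,\omega)^{\ell}$ whose cross term generates the missing damping of $\tilde a_{j}$, and then sum in $j$. The only technical difference lies in the design of $\mathcal L_{j}^{2}$. The paper combines the four identities \eqref{Eq:3.4}--\eqref{Eq:3.7} so as to take
\[
\mathcal L_{j}^{2}=2\big(\|\tilde a_{j}^{\ell}\|_{L^{2}}^{2}+\|\omega_{j}^{\ell}\|_{L^{2}}^{2}+\|\Lambda\tilde a_{j}^{\ell}\|_{L^{2}}^{2}\big)+\|\Lambda^{2}\tilde a_{j}^{\ell}\|_{L^{2}}^{2}-2(\omega_{j}^{\ell}\,|\,\Lambda^{2}\tilde a_{j}^{\ell}),
\]
with a unit-coefficient cross term and the auxiliary norms $\|\Lambda\tilde a_{j}^{\ell}\|_{L^{2}}^{2}$, $\|\Lambda^{2}\tilde a_{j}^{\ell}\|_{L^{2}}^{2}$ included precisely so that the sign-indefinite terms $(\Lambda^{2}\tilde a_{j}^{\ell}\,|\,\omega_{j}^{\ell})$ and $(\Lambda^{2}\omega_{j}^{\ell}\,|\,\Lambda^{2}\tilde a_{j}^{\ell})$ cancel \emph{exactly}; no smallness parameter is needed. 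You instead keep the functional minimal and introduce a small $\eta$ (depending on $j_{0}$) in front of the cross term, then absorb the leftover indefinite piece $\eta\,2^{2j}(\Lambda\tilde a_{j},\Lambda\omega_{j})$ by Young's inequality and the bound $2^{4j}\le 2^{4j_{0}}$. Both routes lead to the same differential inequality $\frac{d}{dt}\mathcal L_{j}+c\,2^{2j}\mathcal L_{j}\lesssim\|(\dot\Delta_{j}\Lambda^{-1}f^{\ell},\dot\Delta_{j}g^{\ell})\|_{L^{2}}$; the paper's construction buys $j_{0}$-independent constants and a cleaner computation, while yours is the standard hypocoercivity template and is slightly more robust to perturbations of the system.
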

\begin{proof}
Let $z_{j}\triangleq \dot{\Delta}_{j}z$. We may apply the operator $\dot{\Delta}_{j}\dot{S}_{j_{0}}$ to \eqref{Eq:3.2}. Taking advantage of the standard energy method, we get the following four equalities:
\begin{equation}\label{Eq:3.4}
\frac{1}{2}\frac{d}{dt}\big(\|\tilde{a}_{j}^{\ell}\|^{2}_{L^{2}}+\|\omega^{\ell}_{j}\|^{2}_{L^{2}}\big)+\|\Lambda \omega^{\ell}_{j}\|_{L^{2}}^{2}
=(\Lambda^{2}\tilde{a}^{\ell}_{j}|\omega^{\ell}_{j})+(\Lambda^{-1}f^{\ell}_{j}|\tilde{a}^{\ell}_{j})
+(h^{\ell}_{j}|\omega^{\ell}_{j}),
\end{equation}
\begin{eqnarray} \label{Eq:3.5}
-\frac{d}{dt}(\omega^{\ell}_{j}|\Lambda^{2}\tilde{a}^{\ell}_{j})+\|\Lambda^{2} \tilde{a}^{\ell}_{j}\|^{2}_{L^{2}}+\|\Lambda \tilde{a}^{\ell}_{j}\|^{2}_{L^{2}}=(\Lambda^{2}\omega^{\ell}_{j}|\Lambda^{2} \tilde{a}^{\ell}_{j})+\|\Lambda \omega^{\ell}_{j}\|^{2}_{L^{2}} \nonumber\\
-(\Lambda f^{\ell}_{j}|\omega^{\ell}_{j})-(h^{\ell}_{j}|\Lambda^{2}\tilde{a}^{\ell}_{j}),
\end{eqnarray}
\begin{equation}\label{Eq:3.6}
\frac{1}{2}\frac{d}{dt}\|\Lambda^{2}\tilde{a}^{\ell}_{j}\|^{2}_{L^{2}}=-(\Lambda^{2}\omega^{\ell}_{j}|\Lambda^{2} \tilde{a}^{\ell}_{j})+(\Lambda^{2} f^{\ell}_{j}|\Lambda\tilde{a}^{\ell}_{j})
\end{equation}
and
\begin{equation}\label{Eq:3.7}
\frac{1}{2}\frac{d}{dt}\|\Lambda \tilde{a}^{\ell}_{j}\|^{2}_{L^{2}}=-(\omega^{\ell}_{j}|\Lambda^{2} \tilde{a}^{\ell}_{j})+(\Lambda f^{\ell}_{j}|\tilde{a}^{\ell}_{j}).
\end{equation}
Combining \eqref{Eq:3.4}-\eqref{Eq:3.7}, one can conclude that
\begin{eqnarray*}
\frac{1}{2}\frac{d}{dt}\mathcal{L}_{j}^{2}+\|(\Lambda^{2} \tilde{a}^{\ell}_{j},\Lambda \tilde{a}^{\ell}_{j},\Lambda \omega^{\ell}_{j})\|^{2}_{L^{2}}=2(\Lambda^{-1}f^{\ell}_{j}|\tilde{a}^{\ell}_{j})+2(h^{\ell}_{j}|\omega^{\ell}_{j})+2(\Lambda f^{\ell}_{j}|\tilde{a}^{\ell}_{j})\\
-(\Lambda f^{\ell}_{j}|\omega^{\ell}_{j})
-(h^{\ell}_{j}|\Lambda^{2}\tilde{a}^{\ell}_{j})+(\Lambda^{2} f^{\ell}_{j}|\Lambda\tilde{a}^{\ell}_{j})
\end{eqnarray*}
with $\mathcal{L}_{j}^{2}\triangleq 2\big(\|\tilde{a}^{\ell}_{j}\|^{2}_{L^{2}}+\|\omega^{\ell}_{j}\|^{2}_{L^{2}}+\|\Lambda \tilde{a}^{\ell}_{j}\|^{2}_{L^{2}}\big)+\|\Lambda^{2}\tilde{a}^{\ell}_{j}\|^{2}_{L^{2}}-2(\omega^{\ell}_{j}|\Lambda^{2}\tilde{a}^{\ell}_{j})$. Thanks to the low-frequency cut-off, we get from Young inequality that $\mathcal{L}_{j}^{2}\thickapprox \|(\tilde{a}^{\ell}_{j},\Lambda \tilde{a}^{\ell}_{j},\Lambda^{2}\tilde{a}^{\ell}_{j},\omega^{\ell}_{j})\|^{2}_{L^{2}}\thickapprox \|(\Lambda\tilde{a}^{\ell}_{j},\tilde{a}^{\ell}_{j},\omega^{\ell})\|^{2}_{L^{2}}\thickapprox \|(\tilde{a}^{\ell}_{j},\omega^{\ell})\|^{2}_{L^{2}}$. Consequently, it follows that
\begin{equation*}
\frac{1}{2}\frac{d}{dt}\mathcal{L}_{j}^{2}+2^{2k}\mathcal{L}_{j}^{2}\lesssim\|(\Lambda^{-1}f^{\ell}_{j},\Lambda f^{\ell}_{j},\Lambda^{2}f^{\ell}_{j},g^{\ell}_{j})\|_{L^{2}}\mathcal{L}_{j},
\end{equation*}
which leads to
\begin{equation*}
\frac{d}{dt}\mathcal{L}_{j}+2^{2j}\mathcal{L}_{j}\lesssim \|(\Lambda^{-1}f^{\ell}_{j},g^{\ell}_{j})\|_{L^{2}}.
\end{equation*}
Therefore, multiplying both sides by $2^{j\big(\frac{d}{2}-1\big)}$ and summing up on $j\in \mathbb{Z}$ yield
\begin{equation}\label{Eq:3.8}
\frac{d}{dt}\|(\tilde{a},\omega)^{\ell}\|_{\dot{B}^{\frac{d}{2}-1}_{2,1}}+\|(\tilde{a},\omega)^{\ell}\|_{\dot{B}^{\frac{d}{2}+1}_{2,1}}\lesssim \|(\Lambda^{-1}f,g)^{\ell}\|_{\dot{B}^{\frac{d}{2}-1}_{2,1}}.
\end{equation}
Hence, \eqref{Eq:3.3} is followed by \eqref{Eq:3.1} and \eqref{Eq:3.8} directly.
\end{proof}
\subsection{High-frequency estimates}
In the high-frequency regime, the nonlocal term $\nabla(-\Delta)^{-1}a$ no longer effective and the Green function behaves the same as that of compressible Navier–Stokesequations. Consequently, we apply the $L^{p}$ energy method and then establish the high-frequency estimates (see \cite{XX} for more details).
\begin{lem}\label{Lem3.2}
let $j_{0}$ be chosen suitably large. It holds that for all $t\geq 0$
\begin{eqnarray}\label{Eq:3.10}
&&\frac{d}{dt}\|(\nabla a,u)\|_{\dot{B}_{p,1}^{\frac {d}{p}-1}}^{h}
+\big(\|\nabla a\|_{\dot{B}_{p,1}^{\frac{d}{p}-1}}^{h}+\|u\|_{\dot{B}_{p,1}^{\frac {d}{p}+1}}^{h}\big)\nonumber\\
&\lesssim & \|f\|^{h}_{\dot{B}^{\frac{d}{p}-2}_{p,1}}+\|g\|^{h}_{\dot{B}^{\frac{d}{p}-1}_{p,1}}
+\|\nabla u\|_{\dot{B}^{\frac{d}{p}}_{p,1}}\|a\|_{\dot{B}^{\frac{d}{p}}_{p,1}},
\end{eqnarray}
where $$\|z\|^{h}_{\dot{B}^{s}_{p,1}}\triangleq \sum _{j\geq j_{0}-1}2^{js}\|\dot{\Delta}_{j}z\|_{L^{p}} \ \ \hbox{for} \ \ s\in\mathbb{R}.$$
\end{lem}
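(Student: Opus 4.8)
The plan is to work directly on the high-frequency blocks of system \eqref{Eq:1.9}, following the $L^{p}$ energy strategy of \cite{XX}. First I would apply $\dot\Delta_j$ (for $j\geq j_0-1$) to both equations of \eqref{Eq:1.9}, writing $a_j\triangleq\dot\Delta_j a$, $u_j\triangleq\dot\Delta_j u$, and rewriting the linear part so that $\mathcal A u$ provides the parabolic smoothing on $u$ and $\nabla a$ couples the two equations. The nonlocal term $\nabla(-\Delta)^{-1}a=\nabla\Lambda^{-2}a$, once localized to $j\geq j_0-1$, is a harmless lower-order perturbation: since $\Lambda^{-2}$ is an order $-2$ Fourier multiplier, $\|\nabla\Lambda^{-2}a_j\|_{L^p}\lesssim 2^{-j}\|a_j\|_{L^p}$, and with $j_0$ large this is absorbed by the $\|\nabla a\|^h$ term on the left. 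So the high-frequency linear dynamics are exactly those of the compressible Navier--Stokes system.

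The core is then a Lyapunov functional at the block level. I would estimate $\frac{d}{dt}\|u_j\|_{L^p}$ and $\frac{d}{dt}\|\nabla a_j\|_{L^p}$ using the standard trick for $L^p$ energy estimates: multiply the equation by $|w_j|^{p-2}w_j$, integrate, and use Lemma 8 of \cite{XX} (the lower bound $\int \mathcal A u_j\cdot|u_j|^{p-2}u_j\,dx\gtrsim 2^{2j}\|u_j\|_{L^p}^p$ valid for $j\geq j_0-1$ and $j_0$ large, exploiting $\mu_\infty>0$) to extract the dissipation $2^{2j}\|u_j\|_{L^p}$ for the velocity. To recover the missing damping on $a$ one forms the cross term $\int\nabla a_j\cdot|?|\cdots$; concretely, following \cite{XX} I would build the functional
\[
\mathcal H_j\triangleq\|u_j\|_{L^p}+\|\nabla a_j\|_{L^p}+\eta\,2^{-j}\!\int |u_j|^{p-2}u_j\cdot\nabla a_j\,dx\Big/\|u_j\|_{L^p}^{p-1}
\]
(or the equivalent cleaner version with $2^{j(\frac dp-1)}$ weights and summation built in), for a small fixed $\eta$. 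Differentiating, the cross term converts part of $\|\nabla a_j\|_{L^p}^2$ into genuine decay $2^{j}\|\nabla a_j\|_{L^p}\cdot$ while producing controllable errors of the form $2^{-j}\|\nabla a_j\|_{L^p}$ (absorbed), $\|\nabla u_j\|_{L^p}$ times lower-order factors, and commutator terms $[\dot\Delta_j,u\cdot\nabla]a$ and $[\dot\Delta_j, \text{coeff}]$-type remainders. Choosing $\eta$ small and $j_0$ large so that $\mathcal H_j\approx\|(u_j,\nabla a_j)\|_{L^p}$, one arrives at
\[
\frac{d}{dt}\mathcal H_j+2^{2j}\|u_j\|_{L^p}+2^{j}\|\nabla a_j\|_{L^p}\lesssim \|f_j\|_{L^p}\cdot 2^{-j}\cdot(\cdots)+\|g_j\|_{L^p}+R_j,
\]
where $R_j$ collects the nonlinear commutator contributions.

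The main obstacle will be the commutator/remainder terms $R_j$, i.e.\ controlling $u\cdot\nabla a$ in the transport equation at high frequency without losing a derivative of $a$. Here one splits $u\cdot\nabla a_j = \dot\Delta_j(u\cdot\nabla a) - [\dot\Delta_j,u\cdot\nabla]a$ and uses the commutator estimate (Lemma in \cite{BCD}, as in \cite{DX,XX}): after multiplying by $2^{j(\frac dp-1)}$ and summing over $j\geq j_0-1$, the commutator is bounded by $\|\nabla u\|_{\dot B^{\frac dp}_{p,1}}\|a\|_{\dot B^{\frac dp}_{p,1}}$, which is exactly the last term on the right of \eqref{Eq:3.10}; the piece $\dot\Delta_j(u\cdot\nabla a)=\dot\Delta_j f$ (up to $\div u\,a$ handled similarly) is the $\|f\|^h_{\dot B^{\frac dp-2}_{p,1}}$ contribution, noting the one-derivative gain since $f=-\div(au)$ and we measure it at regularity $\frac dp-2$. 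Finally I would multiply the block inequality by $2^{j(\frac dp-1)}$, sum over $j\geq j_0-1$ (using Proposition \ref{Prop2.4} and Proposition \ref{Prop2.6} for the remaining quadratic and composite terms in $g$), and use $\|u_j\|_{\dot B^{\frac dp+1}_{p,1}}^h\approx\sum_{j\ge j_0-1}2^{j(\frac dp+1)}\|u_j\|_{L^p}\lesssim\sum 2^{j(\frac dp-1)}2^{2j}\|u_j\|_{L^p}$ and likewise $\|\nabla a\|^h_{\dot B^{\frac dp-1}_{p,1}}\approx\sum 2^{j(\frac dp-1)}2^{j}\|\nabla a_j\|_{L^p}$ to reorganize the dissipation into the stated left-hand side, yielding \eqref{Eq:3.10}.
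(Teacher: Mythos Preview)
Your proposal is correct and follows the same route as the paper: the paper does not give a self-contained proof of Lemma~\ref{Lem3.2} but simply observes that at high frequencies the Poisson contribution $\nabla(-\Delta)^{-1}a$ is a lower-order perturbation (exactly your $\|\nabla\Lambda^{-2}a_j\|_{L^p}\lesssim 2^{-j}\|a_j\|_{L^p}$ remark) and then invokes the $L^p$ energy argument of \cite{XX} for the compressible Navier--Stokes system, which is precisely the block-level Lyapunov functional with cross term and the commutator estimate $[\dot\Delta_j,u\cdot\nabla]a$ that you outline.

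One small correction to your block-level inequality: the damping that the cross term produces on the density is of order~$0$, not order~$1$; you should have $c\|\nabla a_j\|_{L^p}$ on the left rather than $2^{j}\|\nabla a_j\|_{L^p}$. This is consistent with \eqref{Eq:3.10}, where the dissipative term $\|\nabla a\|_{\dot B^{d/p-1}_{p,1}}^{h}$ sits at the \emph{same} regularity as the energy $\|\nabla a\|_{\dot B^{d/p-1}_{p,1}}^{h}$ (constant-rate damping), while only $u$ gains two derivatives. This does not affect your strategy, only the bookkeeping when you sum over $j\geq j_0-1$.
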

%%%%%%%%%%%%%%%%%%%%%%%%%%%%%%%%%%%%%%%%%
\section{The evolution of negative Besov norm}  \setcounter{equation}{0}
This section is devoted to the evolution of Besov norms at low frequencies, which plays the key role in deriving the Lyapunov-type inequality for energy norms.
\begin{lem}\label{Lem4.1}
Let $1-\frac{d}{2}<s_{1}\leq s_{0}$ and $p$ satisfy \eqref{Eq:1.4}. It hold that:
\begin{eqnarray} \label{Eq:4.1}
\big(\|(\tilde{a},u)(t)\|^{\ell}_{\dot{B}^{-s_{1}}_{2,\infty}}\big)^{2}
&\lesssim& \big(\|(\tilde{a}_{0},u_{0})\|^{\ell}_{\dot{B}^{-s_{1}}_{2,\infty}}\big)^{2}
+\int_{0}^{t}D^{1}_{p}(\tau)\big(\|(\tilde{a},u)(\tau)\|^{\ell}_{\dot{B}^{-s_{1}}_{2,\infty}}\big)^{2}d\tau \nonumber\\
&&+\int_{0}^{t} D^{2}_{p}(\tau)\|(\tilde{a},u)(\tau)\|^{\ell}_{\dot{B}^{-s_{1}}_{2,\infty}}d\tau,
\end{eqnarray}
where
\begin{eqnarray*}
&&D_{p}^{1}\triangleq \|a\|_{\dot{B}^{\frac{d}{p}}_{p,1}}+\|u\|^{\ell}_{\dot{B}^{\frac{d}{2}+1}_{2,1}}
+\|u\|^{h}_{\dot{B}^{\frac{d}{p}+1}_{p,1}},\\
&&D_{p}^{2}\triangleq \big(\|a\|^{\ell}_{\dot{B}^{\frac{d}{2}-2}_{2,1}}+\|u\|^{\ell}_{\dot{B}^{\frac{d}{2}-1}_{2,1}}
+\|a\|^{h}_{\dot{B}^{\frac{d}{p}}_{p,1}}+\|u\|^{h}_{\dot{B}^{\frac{d}{p}-1}_{p,1}}\big)\big(\|a\|^{h}_{\dot{B}^{\frac{d}{p}}_{p,1}}
+\|u\|^{h}_{\dot{B}^{\frac{d}{p}+1}_{p,1}}\big).
\end{eqnarray*}
\end{lem}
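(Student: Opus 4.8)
The plan is to work at the level of a fixed dyadic block $\dot\Delta_j$ with $j\le j_0$ and to propagate the $\dot B^{-s_1}_{2,\infty}$ norm of $(\tilde a,u)^\ell$ by a Duhamel/energy argument applied to system \eqref{Eq:3.2}, exactly as in the proof of Lemma \ref{Lem3.1}. First I would decompose $u^\ell=-\Lambda^{-1}\nabla\omega^\ell+\Lambda^{-1}\mathrm{div}\,\Omega^\ell$, so that it suffices to control the low frequencies of $(\tilde a,\omega)$ via \eqref{Eq:3.2} and, separately, the low frequencies of $\Omega$ via its heat equation $\partial_t\Omega-\mu_\infty\Delta\Omega=m$. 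Reusing the functional $\mathcal L_j$ from Lemma \ref{Lem3.1} (which is $\approx\|(\tilde a^\ell_j,\omega^\ell_j)\|_{L^2}$ at low frequencies), the same manipulations give, for each $j\le j_0$,
\begin{equation*}
\frac{d}{dt}\mathcal L_j + 2^{2j}\mathcal L_j \lesssim \|(\Lambda^{-1}f^\ell_j,g^\ell_j)\|_{L^2},
\end{equation*}
and analogously $\frac{d}{dt}\|\Omega^\ell_j\|_{L^2}+2^{2j}\|\Omega^\ell_j\|_{L^2}\lesssim\|g^\ell_j\|_{L^2}$. Since $2^{2j}\ge0$, dropping the damping term and integrating in time yields
\begin{equation*}
\|(\tilde a,u)^\ell_j(t)\|_{L^2}\lesssim \|(\tilde a_0,u_0)^\ell_j\|_{L^2}+\int_0^t\|(\Lambda^{-1}f^\ell_j,g^\ell_j)(\tau)\|_{L^2}\,d\tau.
\end{equation*}
Multiplying by $2^{-js_1}$, taking the supremum over $j\le j_0$, and using $\sup_j\int\le\int\sup_j$, I obtain
\begin{equation*}
\|(\tilde a,u)(t)\|^\ell_{\dot B^{-s_1}_{2,\infty}}\lesssim \|(\tilde a_0,u_0)\|^\ell_{\dot B^{-s_1}_{2,\infty}}+\int_0^t\|(\Lambda^{-1}f,g)(\tau)\|^\ell_{\dot B^{-s_1}_{2,\infty}}\,d\tau.
\end{equation*}

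The core of the argument, and the main obstacle, is then to estimate the nonlinear source $\|(\Lambda^{-1}f,g)\|^\ell_{\dot B^{-s_1}_{2,\infty}}$ and to show it is bounded by $D_p^1\|(\tilde a,u)\|^\ell_{\dot B^{-s_1}_{2,\infty}}+D_p^2$; plugging this in, squaring (via $X(t)^2\lesssim X(0)^2+\int(\cdots)$ after noting $X\,\dot X$ structure, or more simply multiplying the linear-in-$X$ bound by $X$) and absorbing produces \eqref{Eq:4.1}. Here I would split each nonlinearity into low- and high-frequency inputs: $\Lambda^{-1}f=\Lambda^{-1}f^\ell+\Lambda^{-1}f^h$ and likewise for $g$. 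For the ``low $\times$ low'' contributions I would invoke the non-classical product estimates \eqref{Eq:4.4}--\eqref{Eq:4.5} (advertised in the introduction) to gain back the negative regularity $-s_1$ from one factor while paying $\dot B^{d/p}_{p,1}$- or $\dot B^{d/2+1}_{2,1}$-type norms on the other — this is precisely where the terms $\|a\|_{\dot B^{d/p}_{p,1}}$ and $\|u\|^\ell_{\dot B^{d/2+1}_{2,1}}$ in $D_p^1$ originate, multiplying the factor $\|(\tilde a,u)\|^\ell_{\dot B^{-s_1}_{2,\infty}}$. For the ``high-frequency'' contributions of $f$ and $g$, since no small factor in $\dot B^{-s_1}_{2,\infty}$ is available I must produce the fully nonlinear quadratic bound $D_p^2$; this requires treating $2\le p\le d$ and $p>d$ separately, using ordinary Besov product laws (Proposition \ref{Prop2.4}, inequality \eqref{Eq:4.6}) in the former case and Proposition \ref{Prop2.5} (the estimates \eqref{Eq:2.7}--\eqref{Eq:2.8}) in the latter, together with the composition estimates of Proposition \ref{Prop2.6} for the terms $I(a)\mathcal A u$, $k(a)\nabla a$, $\tilde\mu(a)D(u)$, $\tilde\lambda(a)\mathrm{div}\,u$.

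Concretely, the bookkeeping I expect is: $f=-\mathrm{div}(au)$ gives $\Lambda^{-1}f\sim au$, so at low output frequency $\|au\|^\ell_{\dot B^{-s_1}_{2,\infty}}$ is split as $\|a^\ell u\|^\ell+\|a^h u^\ell\|^\ell+\|a^h u^h\|^\ell$ (and symmetrically), each piece handled by a product law that places one factor in a space scaling like $\dot B^{-s_1}_{2,\infty}$; the constraint $1-\frac d2<s_1\le s_0=\frac{2d}{p}-\frac d2$ is exactly what makes these product laws applicable (positivity of the sum of regularities, and the high-frequency indices landing in $\dot B^{d/p}_{p,1}$). For $g$, the genuinely quadratic pieces like $u\cdot\nabla u$ contribute $\|u\|^\ell_{\dot B^{d/2-1}_{2,1}}\|u\|^h_{\dot B^{d/p+1}_{p,1}}$-type products to $D_p^2$, while pieces that are linear in $(a,u)$ after extracting a factor of $a$ contribute the $\|a\|_{\dot B^{d/p}_{p,1}}\times\|(\tilde a,u)\|^\ell_{\dot B^{-s_1}_{2,\infty}}$ terms. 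The delicate point is ensuring that whenever I isolate the factor carrying the $\dot B^{-s_1}_{2,\infty}$ norm, the complementary factor is controlled by $D_p^1$ (time-integrable against the decay functional) rather than by a non-integrable quantity; getting the frequency-localization bookkeeping and the $p>d$ oscillatory case right — so that Proposition \ref{Prop2.5} applies with the correct $p^*$ and $s_0$ — is where the real work lies, the rest being routine once the product laws are in place.
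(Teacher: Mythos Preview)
Your proposal is correct and follows essentially the same approach as the paper: a block-level energy estimate on $(\tilde a,\omega)^\ell$ (plus the heat equation for $\Omega^\ell$) leading to \eqref{Eq:4.3}, followed by the low/high splitting of $(\Lambda^{-1}f,g)$ and the product estimates \eqref{Eq:4.4}--\eqref{Eq:4.5}, \eqref{Eq:4.6}, and Proposition~\ref{Prop2.5} (with the $2\le p\le d$ versus $p>d$ dichotomy) to produce the $D_p^1\cdot X+D_p^2$ bound. Two cosmetic differences: the paper works directly with the squared $L^2$ energy at block level (using only \eqref{Eq:3.4} and \eqref{Eq:3.7}, not the full $\mathcal L_j$) so that \eqref{Eq:4.3} comes out in the squared form immediately, and it splits the nonlinearities by localising only \emph{one} factor (e.g.\ $au=au^\ell+au^h$) rather than both --- your three-term split works but is slightly less economical.
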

\begin{proof}
First of all, let us keep in mind that the global solution $(a,u)$ given by Theorem \ref{Thm1.1} satisfies
\begin{equation} \label{Eq:4.2}
\|a\|_{\tilde{L}^{\infty}_{t}(\dot{B}^{\frac{d}{p}}_{p,1})}\leq c \ll 1 \ \ \hbox{for all} \ \ t\geq 0.
\end{equation}
It follows from \eqref{Eq:3.4} and \eqref{Eq:3.7} that
\begin{equation*}
\frac{1}{2}\frac{d}{dt}\|(\tilde{a}_{k},\Lambda \tilde{a}_{k},\omega_{k})\|^{2}_{L^{2}}+\|\Lambda \omega_{k}\|_{L^{2}}^{2}
\leq \|(\Lambda^{-1}f_{k},\Lambda f_{k},h_{k})\|_{L^{2}}\|(\tilde{a}_{k},\omega_{k})\|_{L^{2}}.
\end{equation*}
By performing a routine procedure, we can conclude that
\begin{equation}\label{Eq:4.3}
\big(\|(\tilde{a},u)(t)\|^{\ell}_{\dot{B}^{-s_{1}}_{2,\infty}}\big)^{2}\lesssim \big(\|(\tilde{a}_{0},u_{0})\|^{\ell}_{\dot{B}^{-s_{1}}_{2,\infty}}\big)^{2}
+\int_{0}^{t} \|(\Lambda^{-1}f, g)\|^{\ell}_{\dot{B}^{-s_{1}}_{2,\infty}}\|(\tilde{a},u)(\tau)\|^{\ell}_{\dot{B}^{-s_{1}}_{2,\infty}}d\tau.
\end{equation}
Next, let us handle the nonlinear norm $\|(\Lambda^{-1}f,g)\|^{\ell}_{\dot{B}^{-s_{1}}_{2,\infty}}$. To do this, it is suitable to decompose $\Lambda^{-1}f$ and $g$ according to low-frequency and high-frequency parts as follows:
$$\Lambda^{-1}f=\Lambda^{-1}f^{\ell}+\Lambda^{-1}f^{h}$$
with
\begin{equation*}
\Lambda^{-1}f^{\ell}\triangleq-\Lambda^{-1}\mathrm{div}\,(au^{\ell}), \ \ \ \ \ \Lambda^{-1}f^{h}\triangleq-\Lambda^{-1}\mathrm{div}\,(au^{h})
\end{equation*}
and
$$g= g^{\ell}+g^{h}$$
with
\begin{eqnarray*}
&&g^{\ell}\triangleq-u\cdot \nabla u^{\ell}-k( a) \nabla a^{\ell}+g_{3}(a,u^{\ell})+g_{4}(a,u^{\ell}),\\
&&g^{h}\triangleq-u\cdot \nabla u^{h}-k(a) \nabla a^{h}+g_{3}(a,u^{h})+g_{4}(a,u^{h}),
\end{eqnarray*}
where
\begin{eqnarray*}
g_{3}(a,v)&=&\frac {1}{1+a}\big( 2\widetilde{\mu }(a)\,\mathrm{div}\,D(v)+
\widetilde{\lambda}(a)\,\nabla \mathrm{div}\,v\big) -I(a)\mathcal{A}v, \\
g_{4}(a,v)&=&\frac {1}{1+a}\big(2\widetilde{\mu}'(a)\,\mathrm{div}\,D(v)\cdot\nabla a+\widetilde{\lambda}'(a)\,\mathrm{div}\,v\, \nabla
a\big)
\end{eqnarray*}
and
$$z^{\ell}\triangleq\sum_{j<j_{0}} \dot{\Delta}_{j}z,\ \  \ \ z^{h}\triangleq z-z^{\ell} \ \ \hbox{for} \ \ z=a,u.$$
As shown by \cite{XX}, we can get the following two estimates
\begin{eqnarray*}
\|u \cdot\nabla u^{\ell}\|^{\ell}_{\dot{B}^{-s_{1}}_{2,\infty}} \lesssim \big(\|u\|^{\ell}_{\dot{B}^{\frac{d}{2}+1}_{2,1}}+\| u\|^{h}_{\dot{B}^{\frac{d}{p}+1}_{p,1}}\big)\|u\|^{\ell}_{\dot{B}^{-s_{1}}_{2,\infty}},\\
\|u\cdot\nabla u^{h}\|^{\ell}_{\dot{B}^{-s_{1}}_{2,\infty}}
\lesssim \big(\|u\|^{\ell}_{\dot{B}^{\frac{d}{2}-1}_{2,1}}+\|u\|^{h}_{\dot{B}^{\frac{d}{p}-1}_{p,1}}\big)
\|u\|^{h}_{\dot{B}^{\frac{d}{p}+1}_{p,1}}.
\end{eqnarray*}
In order to finish the proof of lemma \ref{Lem4.1}, it suffices to bound those “new”
nonlinear terms, which are different terms in comparison with compressible Navier-Stokes equations.
For those terms of $\Lambda^{-1}f$ and $g$ with $a^{\ell}$ or $u^{\ell}$, we shall use the following two inequalities
\begin{eqnarray}\label{Eq:4.4}
\|FG\|_{\dot{B}^{-s_{1}}_{2,\infty}}&\lesssim& \|F\|_{\dot{B}^{\frac{d}{p}}_{p,1}}\|G\|_{\dot{B}^{-s_{1}}_{2,\infty}},\\
\label{Eq:4.5}
\|FG\|_{\dot{B}^{\frac{d}{p}-\frac{d}{2}-s_{1}}_{2,\infty}} &\lesssim& \|F\|_{\dot{B}^{\frac{d}{p}-1}_{p,1}}\|G\|_{\dot{B}^{\frac{d}{p}-\frac{d}{2}-s_{1}+1}_{2,\infty}}
\end{eqnarray}
for $1-\frac{d}{2}<s_{1}\leq s_{0}$. Indeed, the proofs of \eqref{Eq:4.4}-\eqref{Eq:4.5} may be found in \cite{XX}.
For the term with $\Lambda^{-1}\mathrm{div} (au^{\ell})$, we note that, owing to \eqref{Eq:4.4},
\begin{equation*}
\|\Lambda^{-1}\mathrm{div}\,(au^{\ell})\|^{\ell}_{\dot{B}^{-s_{1}}_{2,\infty}} \lesssim \|au^{\ell}\|^{\ell}_{\dot{B}^{-s_{1}}_{2,\infty}} \lesssim \|a\|_{\dot{B}^{\frac{d}{p}}_{p,1}}\|u^{\ell}\|_{\dot{B}^{-s_{1}}_{2,\infty}}
\lesssim \|a\|_{\dot{B}^{\frac{d}{p}}_{p,1}}\|u\|^{\ell}_{\dot{B}^{-s_{1}}_{2,\infty}}.
\end{equation*}
where $\Lambda^{-1}\mathrm{div}$ is an homogeneous Fourier multiplier of degree $0$.
Let us next look at the term involving $k(a)\nabla a^{\ell}$. Thanks to \eqref{Eq:4.4} and Proposition \ref{Prop2.6}, we arrive at
\begin{equation*}
\|k(a)\nabla a^{\ell}\|^{\ell}_{\dot{B}^{-s_{1}}_{2,\infty}} \lesssim \|k(a)\|_{\dot{B}^{\frac{d}{p}}_{p,1}}\|\nabla a^{\ell}\|_{\dot{B}^{-s_{1}}_{2,\infty}}
\lesssim \|a\|_{\dot{B}^{\frac{d}{p}}_{p,1}}\|\tilde{a}\|^{\ell}_{\dot{B}^{-s_{1}}_{2,\infty}}.
\end{equation*}
Similarly, we have
\begin{equation*}
\|g_{3}(a,u^{\ell})\|^{\ell}_{\dot{B}^{-s_{1}}_{2,\infty}}
\lesssim \|a\|_{\dot{B}^{\frac{d}{p}}_{p,1}}\|u\|^{\ell}_{\dot{B}^{-s_{1}}_{2,\infty}}.
\end{equation*}
To deal with the term with $g_{4}(a,u^{\ell})$, it follows from the fact $s_{1}\leq s_{1}+\frac{d}{2}-\frac{d}{p}$, \eqref{Eq:4.5} and Proposition \ref{Prop2.6} that
\begin{equation*}
\|g_{4}(a,u^{\ell})\|^{\ell}_{\dot{B}^{-s_{1}}_{2,\infty}} \lesssim
\|g_{4}(a,u^{\ell})\|^{\ell}_{\dot{B}^{\frac{d}{p}-\frac{d}{2}-s_{1}+1}_{2,\infty}}
\lesssim \|a\|_{\dot{B}^{\frac{d}{p}}_{p,1}}\|\nabla u^{\ell}\|_{\dot{B}^{\frac{d}{p}-\frac{d}{2}-s_{1}+1}_{2,\infty}}
\lesssim \|a\|_{\dot{B}^{\frac{d}{p}}_{p,1}}\| u\|^{\ell}_{\dot{B}^{-s_{1}}_{2,\infty}}.
\end{equation*}

For those ``new'' terms in $\Lambda^{-1}f^{h}$ and $g^{h}$, precisely,
$$\Lambda^{-1}\mathrm{div}\,(au^{h}), \ \ k(a)\nabla a^{h}, \ \ g_{3}(a,u^{h}) \ \ \hbox{and} \ \ g_{4}(a,u^{h}),$$
we shall proceed these calculations differently depending on whether $2\leq p\leq d$ or $p>d$. For the case $2\leq p\leq d$, we shall take advantage of the following inequality
\begin{equation}\label{Eq:4.6}
\|FG^{h}\|^{\ell}_{\dot{B}^{-s_{1}}_{2,\infty}}\lesssim \|FG^{h}\|^{\ell}_{\dot{B}^{-s_{0}}_{2,\infty}}
\lesssim \|F\|_{\dot{B}^{\frac{d}{p}-1}_{p,1}}
\|G^{h}\|_{\dot{B}^{\frac{d}{p}-1}_{p,1}} \ \ \hbox{for all} \ \ 1-\frac{d}{2}<s_{1}\leq s_{0},
\end{equation}
which has been shown by \cite{SX2}.
With the aid of \eqref{Eq:4.6}, we get 
\begin{equation*}
\|\Lambda^{-1}\mathrm{div}\,(au^{h})\|^{\ell}_{\dot{B}^{-s_{1}}_{2,\infty}}
\lesssim \|a\|_{\dot{B}^{\frac{d}{p}-1}_{p,1}}\|u^{h}\|_{\dot{B}^{\frac{d}{p}-1}_{p,1}}
\lesssim \big(\|a\|^{\ell}_{\dot{B}^{\frac{d}{2}-2}_{2,1}}+\|a\|^{h}_{\dot{B}^{\frac{d}{p}}_{p,1}}\big)
\|u\|^{h}_{\dot{B}^{\frac{d}{p}+1}_{p,1}}.
\end{equation*}
To bound the term corresponding to $k(a)\nabla a^{h}$, we can use \eqref{Eq:4.6}, \eqref{Eq:4.2} and Proposition \ref{Prop2.6} with $\frac{d}{p}-1>-\frac{d}{p}$ $(p<d<2d)$, and get
\begin{equation*}
\|k(a)\nabla a^{h}\|^{\ell}_{\dot{B}^{-s_{1}}_{2,\infty}}
\lesssim\|a\|_{\dot{B}^{\frac{d}{p}-1}_{p,1}}\|\nabla a^{h}\|_{\dot{B}^{\frac{d}{p}-1}_{p,1}}
\lesssim\big(\|a\|^{\ell}_{\dot{B}^{\frac{d}{2}-2}_{2,1}}+\|a\|^{h}_{\dot{B}^{\frac{d}{p}}_{p,1}}\big)\|a\|^{h}_{\dot{B}^{\frac{d}{p}}_{p,1}}.
\end{equation*}
Similarly, we arrive at
\begin{equation*}
\|g_{3}(a,u^{h})\|^{\ell}_{\dot{B}^{-s_{1}}_{2,\infty}}\lesssim \|a\|_{\dot{B}^{\frac{d}{p}-1}_{p,1}}\|\nabla ^{2}u^{h}\|_{\dot{B}^{\frac{d}{p}-1}_{p,1}}\lesssim \big(\|a\|^{\ell}_{\dot{B}^{\frac{d}{2}-2}_{2,1}}+\|a\|^{h}_{\dot{B}^{\frac{d}{p}}_{p,1}}\big)
\|u\|^{h}_{\dot{B}^{\frac{d}{p}+1}_{p,1}}.
\end{equation*}
For the term containing $g_{4}(a,u^{h})$, we write that, due to \eqref{Eq:4.6} and Proposition \ref{Prop2.6},
\begin{equation*}
\|g_{4}(a,u^{h})\|^{\ell}_{\dot{B}^{-s_{1}}_{2,\infty}} \lesssim\|a\|_{\dot{B}^{\frac{d}{p}}_{p,1}}\| u^{h}\|_{\dot{B}^{\frac{d}{p}}_{p,1}}
\lesssim \big(\|a\|^{\ell}_{\dot{B}^{\frac{d}{2}-2}_{2,1}}+\|a\|^{h}_{\dot{B}^{\frac{d}{p}}_{p,1}}\big)
\| u\|^{h}_{\dot{B}^{\frac{d}{p}+1}_{p,1}}.
\end{equation*}

In what follows, we consider the oscillation case $p>d$. Using the fact that $s_{1}\leq s_{0}$ and
applying \eqref{Eq:2.7} with $\sigma=1-\frac{d}{p}>0$ yield
\begin{equation}\label{Eq:4.7}
\|F G^{h}\|^{\ell}_{\dot{B}^{-s_{1}}_{2,\infty}}\lesssim
\|F G^{h}\|^{\ell}_{\dot{B}^{-s_{0}}_{2,\infty}}\lesssim \big(\|F\|_{\dot{B}^{1-\frac{d}{p}}_{p,1}}+\|F^{\ell}\|_{L^{p^{*}}}\big)\|G^{h}\|_{\dot{B}^{\frac{d}{p}-1}_{p,1}} \ \hbox{with} \ \frac{1}{p^{*}}\triangleq\frac{1}{2}-\frac{1}{p}.
\end{equation}
Using the embeddings $\dot{B}^{\frac{d}{p}}_{2,1}\hookrightarrow L^{p^{*}}$, $\dot{B}^{\frac{d}{2}-1}_{2,1}\hookrightarrow\dot{B}^{\frac{d}{p}-1}_{p,1}$ and the relations $\frac{d}{2}-1\leq\frac{d}{p}$ and $\frac{d}{p}-1<1-\frac{d}{p}<\frac{d}{p}$ for $p>d$ satisfying \eqref{Eq:1.4}, we arrive at
\begin{equation}\label{Eq:4.8}
\|F G^{h}\|^{\ell}_{\dot{B}^{-s_{1}}_{2,\infty}}\lesssim
\big(\|F^{\ell}\|_{\dot{B}^{\frac{d}{2}-1}_{2,1}}+\|F^{h}\|_{\dot{B}^{\frac{d}{p}}_{p,1}}\big)\|G^{h}\|_{\dot{B}^{\frac{d}{p}-1}_{p,1}}.
\end{equation}
Taking $F=a$ and $G=u$ in \eqref{Eq:4.8} yields
\begin{equation*}
\|\Lambda^{-1}\mathrm{div}\,(au^{h})\|^{\ell}_{\dot{B}^{-s_{1}}_{2,\infty}}
\lesssim \big(\|a\|^{\ell}_{\dot{B}^{\frac{d}{2}-2}_{2,1}}+\|a\|^{h}_{\dot{B}^{\frac{d}{p}}_{p,1}}\big)
\|u\|^{h}_{\dot{B}^{\frac{d}{p}+1}_{p,1}}.
\end{equation*}
We observe that, using the composition inequality in Lebesgue spaces, the embeddings $\dot{B}^{\frac{d}{p}}_{2,1}\hookrightarrow L^{p^{*}}$ and $\dot{B}^{s_{0}}_{p,1}\hookrightarrow L^{p^{*}}$ and the relations $\frac{d}{p}>\frac{d}{2}-2$ and $s_{0}\leq \frac{d}{p}$, we obtain
\begin{equation*}
\|k(a)\|_{L^{p^{*}}}\lesssim\|a\|_{L^{p^{*}}}\lesssim\|a^{\ell}\|_{\dot{B}^{\frac{d}{p}}_{2,1}}+\|a^{h}\|_{\dot{B}^{s_{0}}_{p,1}}
\lesssim\|a\|^{\ell}_{\dot{B}^{\frac{d}{2}-2}_{2,1}}+\|a\|^{h}_{\dot{B}^{\frac{d}{p}}_{p,1}}.
\end{equation*}
Hence, taking advantage of \eqref{Eq:4.7}, Proposition \ref{Prop2.6} and applying the fact $\frac{d}{p}-2<\frac{d}{p}-1<0<1-\frac{d}{p}<\frac{d}{p}$ and the embedding $\dot{B}^{\frac{d}{2}-2}_{2,1}\hookrightarrow\dot{B}^{\frac{d}{p}-2}_{p,1}$ yield
\begin{eqnarray} \label{Eq:4.9}
\|k(a)\nabla a^{h}\|^{\ell}_{\dot{B}^{-s_{1}}_{2,\infty}}
&\lesssim& \big(\|a\|_{\dot{B}^{1-\frac{d}{p}}_{p,1}}
+\|a\|^{\ell}_{\dot{B}^{\frac{d}{2}-2}_{2,1}}+\|a\|^{h}_{\dot{B}^{\frac{d}{p}}_{p,1}}\big)
\|\nabla a^{h}\|_{\dot{B}^{\frac{d}{p}-1}_{p,1}}\nonumber\\
&\lesssim & \big(\|a\|^{\ell}_{\dot{B}^{\frac{d}{2}-2}_{2,1}}+\|a\|^{h}_{\dot{B}^{\frac{d}{p}}_{p,1}}\big)
\| a\|^{h}_{\dot{B}^{\frac{d}{p}}_{p,1}}.
\end{eqnarray}
For the term with $g_{3}(a,u^{h})$, we mimic the procedure leading to \eqref{Eq:4.9} and get
\begin{equation*}
\|g_{3}(a,u^{h})\|^{\ell}_{\dot{B}^{-s_{1}}_{2,\infty}}
\lesssim  \big(\|a\|^{\ell}_{\dot{B}^{\frac{d}{2}-2}_{2,1}}+\|a\|^{h}_{\dot{B}^{\frac{d}{p}}_{p,1}}\big)
\|u\|^{h}_{\dot{B}^{\frac{d}{p}+1}_{p,1}}.
\end{equation*}
Let us finally bound the term with $g_{4}(a,u^{h})$. Applying \eqref{Eq:2.8} with $\sigma=1-\frac{d}{p}$ yields for any smooth function $K$ vanishing at $0$,
\begin{equation*}
\|\nabla K(a)\otimes \nabla u^{h}\|^{\ell}_{\dot{B}^{-s_{0}}_{2,\infty}}
\lesssim \big(\|\nabla u^{h}\|_{\dot{B}^{1-\frac{d}{p}}_{p,1}}
+\sum_{j=j_{0}}^{j_{0}+N_{0}-1}\|\dot{ \Delta}_{j}\nabla u^{h}\|_{L^{p^{*}}}\big)\|\nabla K(a)\|_{\dot{B}^{\frac{d}{p}-1}_{p,1}}.
\end{equation*}
As $p^{*}\geq p$, it follows from Bernstein inequality that
$\|\dot{ \Delta}_{j}\nabla u^{h}\|_{L^{p^{*}}}\lesssim \|\dot{ \Delta}_{j}\nabla u^{h}\|_{L^{p}}$ for $j_{0}\leq j<j_{0}+N_{0}$.
Hence, with the aid of Proposition \ref{Prop2.6} and the relations $s_{1}\leq s_{0}$ and $1-\frac{d}{p}<\frac{d}{p}$, we get
\begin{equation*}
\|g_{4}(a,u^{h})\|^{\ell}_{\dot{B}^{-s_{1}}_{2,\infty}}
\lesssim \|a\|_{\dot{B}^{\frac{d}{p}}_{p,1}}\|\nabla u^{h}\|_{\dot{B}^{1-\frac{d}{p}}_{p,1}}
\lesssim \big(\|a\|^{\ell}_{\dot{B}^{\frac{d}{2}-2}_{2,1}}+\|a\|^{h}_{\dot{B}^{\frac{d}{p}}_{p,1}}\big)
\|u\|^{h}_{\dot{B}^{\frac{d}{p}+1}_{p,1}}.
\end{equation*}
By inserting above all estimates into \eqref{Eq:4.3} gives \eqref{Eq:4.1}.
\end{proof}
According to the definition of $\mathcal{E}_{p}(t)$ in Theorem \ref{Thm1.1}, we deduce that
\begin{equation} \label{Eq:4.10}
\int_{0}^{t}D^{1}_{p}(\tau)d\tau \leq \mathcal{E}_{p}\leq C \mathcal{E}_{p,0}
\end{equation}
and
\begin{equation} \label{Eq:4.11}
\int_{0}^{t}D^{2}_{p}(\tau)d\tau \leq \mathcal{E}^{2}_{p}\leq C \mathcal{E}_{p,0},
\end{equation}
since $ \mathcal{E}_{p,0}\ll 1$. Finally, combining \eqref{Eq:4.10} and \eqref{Eq:4.11}, one can make use of nonlinear generalisations of the Gronwall's inequality (see for example, Page 360 of \cite{MPF}) and obtain
\begin{equation}\label{Eq:4.12}
\|(\tilde{a},u)(t,\cdot)\|^{\ell}_{\dot{B}^{-s_{1}}_{2,\infty}}\leq C_{0} \ \ \hbox{for all} \ \ t\geq 0,
\end{equation}
where $C_{0}>0$ depends on the norms $\|a_{0}\|^{\ell}_{\dot{B}^{-s_{1}-1}_{2,\infty}}$ and $\|u_{0}\|^{\ell}_{\dot{B}^{-s_{1}}_{2,\infty}}$.
\section{Proofs of main results} \setcounter{equation}{0}
In this section, we present the proofs of Theorem \ref{Thm1.2} and Corollary \ref{Cor1.1}.  
\subsection{Proof of Theorem \ref{Thm1.2}}
We get from Lemmas \ref{Lem3.1} and \ref{Lem3.2} that
\begin{eqnarray}\label{Eq:5.1}
\frac{d}{dt}\big(\|(\tilde{a},u)^{\ell}\|_{\dot{B}^{\frac{d}{2}-1}_{2,1}}+\|(\nabla a,u)\|_{\dot{B}_{p,1}^{\frac {d}{p}-1}}^{h}\big)
+\big(\|(\tilde{a},u)^{\ell}\|_{\dot{B}^{\frac{d}{2}+1}_{2,1}}+\|\nabla a\|_{\dot{B}_{p,1}^{\frac {d}{p}-1}}^{h}+\|u\|_{\dot{B}_{p,1}^{\frac {d}{p}+1}}^{h}\big)\nonumber\\
\lesssim\|(\Lambda^{-1}f,g)\|^{\ell}_{\dot{B}^{\frac{d}{2}-1}_{2,1}}+\|f\|^{h}_{\dot{B}^{\frac{d}{p}-2}_{p,1}}+\|g\|^{h}_{\dot{B}^{\frac{d}{p}-1}_{p,1}}
+\|\nabla u\|_{\dot{B}^{\frac{d}{p}}_{p,1}}\|a\|_{\dot{B}^{\frac{d}{p}}_{p,1}},
\end{eqnarray}
where
\begin{equation*}
\|z\|^{\ell}_{\dot{B}^{s}_{2,1}}\triangleq \sum _{j\leq j_{0}}2^{js}\|\dot{\Delta}_{j}z\|_{L^{p}} \ \ \hbox{for} \ \ s\in\mathbb{R}.
\end{equation*}
Note that the fact $p\geq2$, the last term above can be bounded easily by
$\mathcal{E}_{p}(t)(\|u^{\ell}\|_{\dot{B}_{2,1}^{\frac {d}{2}+1}}+\|u\|_{\dot{B}_{p,1}^{\frac {d}{p}+1}}^{h})$. Next, it follows from Proposition \ref{Prop2.4} and Bernstein inequality that
\begin{equation*}
\|f\|^{h}_{\dot{B}^{\frac{d}{p}-2}_{p,1}}\lesssim \|au\|^{h}_{\dot{B}^{\frac{d}{p}-1}_{p,1}}\lesssim \|a\|_{\dot{B}^{\frac{d}{p}}_{p,1}} \|u\|_{\dot{B}^{\frac{d}{p}-1}_{p,1}}\lesssim \mathcal{E}_{p}(t)\big(\|\tilde{a}^{\ell}\|_{\dot{B}^{\frac{d}{2}+1}_{2,1}} +\|a\|^{h}_{\dot{B}^{\frac{d}{p}}_{p,1}}\big).
\end{equation*}
On the other hand, we observe that
\begin{equation*}
\|g\|^{h}_{\dot{B}^{\frac{d}{p}-1}_{p,1}}\lesssim \|u\|_{\dot{B}^{\frac{d}{p}-1}_{p,1}}\|\nabla u\|_{\dot{B}^{\frac{d}{p}}_{p,1}}+\|a\|_{\dot{B}^{\frac{d}{p}}_{p,1}}\|\nabla u\|_{\dot{B}^{\frac{d}{p}}_{p,1}}+\|a\|^{2}_{\dot{B}^{\frac{d}{p}}_{p,1}}.
\end{equation*}
Furthermore, we arrive at
\begin{equation*}
\|g\|^{h}_{\dot{B}^{\frac{d}{p}-1}_{p,1}}\lesssim \mathcal{E}_{p}(t)\big(\|(\tilde{a},u)^{\ell}\|_{\dot{B}^{\frac{d}{2}+1}_{2,1}}+\|a\|^{h}_{\dot{B}^{\frac{d}{p}}_{p,1}}
+\|u\|_{\dot{B}_{p,1}^{\frac {d}{p}+1}}^{h}\big).
\end{equation*}
So we are left with the proof of
\begin{equation}\label{Eq:5.2}
\|(\Lambda^{-1}f,g)\|^{\ell}_{\dot{B}^{\frac{d}{2}-1}_{2,1}} \lesssim \mathcal{E}_{p}(t)\big(\|(\tilde{a},u)^{\ell}\|_{\dot{B}^{\frac{d}{2}+1}_{2,1}}+\|a\|^{h}_{\dot{B}^{\frac{d}{p}}_{p,1}}
+\|u\|_{\dot{B}_{p,1}^{\frac {d}{p}+1}}^{h}\big).
\end{equation}
In order to prove \eqref{Eq:5.2}, we need the following two inequalities (see \cite{CD2, DR2}):
\begin{equation}\label{Eq:5.3}
\|T_{a}b\| _{\dot{B}_{2,1}^{\sigma-1+\frac {d}{2}-\frac {d}{p}}}
\lesssim \|a\|_{\dot{B}_{p,1}^{\frac {d}{p}-1}}\|b\| _{\dot{B}_{p,1}^{\sigma}}
\ \hbox{if} \ \sigma\in\mathbb{R}, \ d\geq 2 \ \hbox{and} \ 1\leq p\leq \min (4,d^{*}),
\end{equation}
\begin{equation}\label{Eq:5.4}
\| R(a,b)\| _{\dot{B}_{2,1}^{\sigma-1+\frac {d}{2}-\frac {d}{p}}}
\lesssim \|a\| _{\dot{B}_{p,1}^{\frac {d}{p}-1}}\| b\| _{\dot{B}_{p,1}^{\sigma}}
\ \hbox{if} \ \sigma>1-\min\big(\frac {d}{p},\frac {d}{p'}\big) \
\hbox{and} \ 1\leq p\leq 4,
\end{equation}
where $\frac{1}{p}+\frac{1}{p'}=1$ and $d^{*}=\frac{2d}{d-2}$.

Note that $\Lambda^{-1}\mathrm{div}$ is an homogeneous
Fourier multiplier of degree $0$, we write that
\begin{equation*}
\|\Lambda^{-1}f\|^{\ell}_{\dot{B}^{\frac{d}{2}-1}_{2,1}} \lesssim \|au\|^{\ell}_{\dot{B}^{\frac{d}{2}-1}_{2,1}}.
\end{equation*}
Taking advantage of Bony's para-product decomposition gives
$au=T_{a}u+R(a,u)+T_{u}a.$
According to \eqref{Eq:5.3} and \eqref{Eq:5.4} with $\sigma=\frac{d}{p}$, we have
\begin{equation*}
\|T_{a}u\|^{\ell}_{\dot{B}^{\frac{d}{2}-1}_{2,1}}+\|R(a,u)\|^{\ell}_{\dot{B}^{\frac{d}{2}-1}_{2,1}}
+\|T_{u}a\|^{\ell}_{\dot{B}^{\frac{d}{2}-1}_{2,1}}
\lesssim \|a\|_{\dot{B}_{p,1}^{\frac {d}{p}-1}}\|u\| _{\dot{B}_{p,1}^{\frac{d}{p}}}
+ \|u\|_{\dot{B}_{p,1}^{\frac {d}{p}-1}}\|a\| _{\dot{B}_{p,1}^{\frac{d}{p}}}.
\end{equation*}
With the aid of the interpolations, embeddings and Young inequality, we observe that the right-side norm of the above three inequalities can be bounded by
$$\mathcal{E}_{p}(t)\big(\|(\tilde{a},u)^{\ell}\|_{\dot{B}^{\frac{d}{2}+1}_{2,1}}+\|a\|^{h}_{\dot{B}^{\frac{d}{p}}_{p,1}}
+\|u\|_{\dot{B}_{p,1}^{\frac {d}{p}+1}}^{h}\big).$$
Hence, we conclude that
\begin{equation*}
\|\Lambda^{-1}f\|^{\ell}_{\dot{B}^{\frac{d}{2}-1}_{2,1}} \lesssim \mathcal{E}_{p}(t)\big(\|(\tilde{a},u)^{\ell}\|_{\dot{B}^{\frac{d}{2}+1}_{2,1}}+\|a\|^{h}_{\dot{B}^{\frac{d}{p}}_{p,1}}
+\|u\|_{\dot{B}_{p,1}^{\frac {d}{p}+1}}^{h}\big).
\end{equation*}
It follows from \cite{XX} that
\begin{equation*}
\|u\cdot\nabla u\|^{\ell}_{\dot{B}_{2,1}^{\frac{d}{2}-1}} \lesssim \mathcal{E}_{p}(t) \big(\|u^{\ell}\| _{\dot{B}_{2,1}^{\frac{d}{2}+1}}+\|u\|^{h}_{\dot{B}_{p,1}^{\frac{d}{p}+1}}\big).
\end{equation*}
For the term with $k(a)\nabla a$, we use the decomposition
\begin{equation*}
k(a)\nabla a=T_{\nabla a}k(a)+R(\nabla a,k(a))+T_{k(a)}\nabla a.
\end{equation*}
With the aid of \eqref{Eq:5.3} and \eqref{Eq:5.4} with $\sigma=\frac{d}{p}$ as well as Proposition \ref{Prop2.6}, we get
\begin{equation*}
\|T_{\nabla a}k(a)\|^{\ell}_{\dot{B}^{\frac{d}{2}-1}_{2,1}}
\lesssim \|a\|^{2}_{\dot{B}_{p,1}^{\frac {d}{p}}},\ \
\|R(\nabla a,k(a))\|^{\ell}_{\dot{B}^{\frac{d}{2}-1}_{2,1}}
\lesssim \|a\|^{2}_{\dot{B}_{p,1}^{\frac {d}{p}}}
\end{equation*}
and, owing to \eqref{Eq:5.3} with $\sigma=\frac{d}{p}-1$ and Proposition \ref{Prop2.6}, \eqref{Eq:4.2}
\begin{equation*}
\|T_{k(a)}\nabla a\|^{\ell}_{\dot{B}^{\frac{d}{2}-1}_{2,1}}
\lesssim \|T_{k(a)}\nabla a\|^{\ell}_{\dot{B}^{\frac{d}{2}-2}_{2,1}}
\lesssim \|a\|_{\dot{B}_{p,1}^{\frac {d}{p}-1}}\|a\| _{\dot{B}_{p,1}^{\frac{d}{p}}}.
\end{equation*}
Hence, we have
\begin{equation*}
\|k(a)\nabla a\|^{\ell}_{\dot{B}_{2,1}^{\frac{d}{2}-1}}\lesssim \mathcal{E}_{p}(t) \big(\|\tilde{a}^{\ell}\| _{\dot{B}_{2,1}^{\frac{d}{2}+1}}+\|a\|^{h} _{\dot{B}_{p,1}^{\frac{d}{p}}}\big).
\end{equation*}
The term $g_{3}(a,u)$ of $g$ is of the type $H(a)\nabla^{2} u$ with $H(a)=0$. Consequently, we write that
\begin{equation*}
H(a)\nabla^{2} u=T_{\nabla^{2} u}H(a)+R(\nabla^{2}u,H(a))+T_{H(a)}\nabla ^{2} u.
\end{equation*}
We conclude that exactly as the previous term $k(a)\nabla a$ that
\begin{equation*}
\|g_{3}(a,u)\|^{\ell}_{\dot{B}_{2,1}^{\frac{d}{2}-1}}\lesssim \mathcal{E}_{p}(t) \big(\|u^{\ell}\| _{\dot{B}_{2,1}^{\frac{d}{2}+1}}+\|u\|^{h} _{\dot{B}_{p,1}^{\frac{d}{p}+1}}\big).
\end{equation*}
The term $g_{4}(a,u)$ of $g$ is of the type $\nabla K(a)\otimes \nabla u$ with $K(0)=0$. Let us use the decomposition
\begin{equation*}
\nabla K(a)\otimes\nabla u=T_{\nabla K(a)}\nabla u+R(\nabla K(a),\nabla u)+T_{\nabla u}\nabla K(a).
\end{equation*}
By virtue of \eqref{Eq:5.3} and \eqref{Eq:5.4} with $\sigma=\frac{d}{p}$ and Proposition \ref{Prop2.6}, we have
\begin{equation*}
\|T_{\nabla K(a)}\nabla u\|^{\ell}_{\dot{B}^{\frac{d}{2}-1}_{2,1}}
+\|R(\nabla K(a),\nabla u)\|^{\ell}_{\dot{B}^{\frac{d}{2}-1}_{2,1}}
\lesssim\|a\|_{\dot{B}_{p,1}^{\frac {d}{p}}}\|\nabla u\| _{\dot{B}_{p,1}^{\frac{d}{p}}}.
\end{equation*}
It follows from \eqref{Eq:5.3} with $\sigma=\frac{d}{p}-1$ and Proposition \ref{Prop2.6} that
\begin{eqnarray*}
\|T_{\nabla u}\nabla K(a)\|^{\ell}_{\dot{B}^{\frac{d}{2}-1}_{2,1}}
\lesssim \|T_{\nabla u}\nabla K(a)\|^{\ell}_{\dot{B}^{\frac{d}{2}-2}_{2,1}}
\lesssim\|u\| _{\dot{B}_{p,1}^{\frac{d}{p}}}\|a\| _{\dot{B}_{p,1}^{\frac{d}{p}}}.
\end{eqnarray*}
Then we discover that
\begin{equation*}
\|g_{4}(a,u)\|^{\ell}_{\dot{B}_{2,1}^{\frac{d}{2}-1}}\lesssim \mathcal{E}_{p}(t)\big(\|(\tilde{a},u)^{\ell}\|_{\dot{B}^{\frac{d}{2}+1}_{2,1}}+\|a\|^{h}_{\dot{B}^{\frac{d}{p}}_{p,1}}
+\|u\|_{\dot{B}_{p,1}^{\frac {d}{p}+1}}^{h}\big).
\end{equation*}
Hence, the the inequality \eqref{Eq:5.2} is proved.

Inserting the above estimates into \eqref{Eq:5.1} and performing the fact that $\mathcal{E}_{p}(t)\lesssim \mathcal{E}_{p,0}\ll1$ for all $t\geq 0$ guaranteed by Theorem \ref{Thm1.1}, we end up with
\begin{equation*}
\frac{d}{dt}\big(\|(\tilde{a},u)^{\ell}\|_{\dot{B}^{\frac{d}{2}-1}_{2,1}}+\|(\nabla a,u)\|_{\dot{B}_{p,1}^{\frac {d}{p}-1}}^{h}\big)
+\big(\|(\tilde{a},u)^{\ell}\|_{\dot{B}^{\frac{d}{2}+1}_{2,1}}+\|\nabla a\|_{\dot{B}_{p,1}^{\frac {d}{p}-1}}^{h}+\|u\|_{\dot{B}_{p,1}^{\frac {d}{p}+1}}^{h}\big)\leq 0.
\end{equation*}

In what follows, we observe that interpolation plays the key role to obtain the time-decay estimates. Owing to the fact
$-s_{1}<\frac{d}{2}-1\leq \frac{d}{p}<\frac{d}{2}+1$, we get from Proposition \ref{Prop2.2} (the real interpolation) that
\begin{equation*}
\|(\tilde{a},u)^{\ell}\|_{\dot{B}^{\frac{d}{2}-1}_{2,1}}\lesssim \big(\|(\tilde{a},u)\|^{\ell}_{\dot{B}^{-s_{1}}_{2,\infty}}\big)^{\theta_{0}}
\big(\|(\tilde{a},u)\|^{\ell}_{\dot{B}^{\frac{d}{2}+1}_{2,\infty}}\big)^{1-\theta_{0}} \ \ \hbox{with} \ \ \theta_{0}=\frac{2}{d/2+1+s_{1}}\in(0,1).
\end{equation*}
With the aid of \eqref{Eq:4.12}, we arrive at
\begin{equation*}
\|(\tilde{a},u)\|^{\ell}_{\dot{B}^{\frac{d}{2}+1}_{2,\infty}}\geq c_{0}\big(\|(\tilde{a},u)^{\ell}\|_{\dot{B}^{\frac{d}{2}-1}_{2,1}}\big)^{\frac{1}{1-\theta_{0}}} \ \ \hbox{with} \ \ c_{0}=C^{-\frac{1}{1-\theta_{0}}}C_{0}^{-\frac{\theta_{0}}{1-\theta_{0}}}.
\end{equation*}
On the other hand, it follows from the fact $\|(\nabla a,u)\|^{h}_{\dot{B}^{\frac{d}{p}-1}_{p,1}}\leq \mathcal{E}_{p}(t)\lesssim \mathcal{E}_{p,0}\ll 1$ for all $t\geq0$ that
\begin{equation*}
\|\nabla a\|^{h}_{\dot{B}^{\frac{d}{p}-1}_{p,1}}\geq (\|\nabla a\|^{h}_{\dot{B}^{\frac{d}{p}-1}_{p,1}})^{\frac{1}{1-\theta_{0}}} \ \ \hbox{and} \ \ \|u\|^{h}_{\dot{B}^{\frac{d}{p}+1}_{p,1}}\geq (\|u\|^{h}_{\dot{B}^{\frac{d}{p}-1}_{p,1}})^{\frac{1}{1-\theta_{0}}}.
\end{equation*}
Consequently, there exists a constant $\tilde{c}_{0}>0$ such that the following Lyapunov-type inequality holds
\begin{equation}\label{Eq:5.5}
\frac{d}{dt}\big(\|(\tilde{a},u)^{\ell}\|_{\dot{B}^{\frac{d}{2}-1}_{2,1}}+\|(\nabla a,u)\|_{\dot{B}_{p,1}^{\frac {d}{p}-1}}^{h}\big) +\tilde{c}_{0}\big(\|(\tilde{a},u)^{\ell}\|_{\dot{B}^{\frac{d}{2}-1}_{2,1}}+\|(\nabla a,u)\|_{\dot{B}_{p,1}^{\frac {d}{p}-1}}^{h}\big) ^{1+\frac{2}{d/2-1+s_{1}}}\leq 0.
\end{equation}
Solving \eqref{Eq:5.5} directly yields
\begin{eqnarray}\label{Eq:5.6}
\|(\tilde{a},u)^{\ell}\|_{\dot{B}^{\frac{d}{2}-1}_{2,1}}+\|(\nabla a,u)\|_{\dot{B}_{p,1}^{\frac {d}{p}-1}}^{h}
&\leq&\big(\mathcal{E}_{p,0}^{-\frac{2}{d/2-1+s_{1}}}+\frac{2\tilde{c}_{0}t}{d/2-1+s_{1}}\big)^{-\frac{d/2-1+s_{1}}{2}} \nonumber\\
&\lesssim & (1+t)^{-\frac{d/2-1+s_{1}}{2}} \ \ \hbox{for all} \ \ t\geq 0.
\end{eqnarray}
It follows from Proposition \ref{Prop2.3} and \eqref{Eq:5.6} that
\begin{equation}\label{Eq:5.7}
\|a\|_{\dot{B}_{p,1}^{\frac{d}{p}-2}}\lesssim\|a^{\ell}\|_{\dot{B}_{2,1}^{\frac{d}{2}-2}}+\|a\|^{h}_{\dot{B}_{p,1}^{\frac{d}{p}-2}}
\lesssim \|\tilde{a}^{\ell}\|_{\dot{B}_{2,1}^{\frac{d}{2}-1}}+\|a\|^{h}_{\dot{B}_{p,1}^{\frac{d}{p}}}
\lesssim (1+t)^{-\frac{d}{4}-\frac{s_{1}-1}{2}}
\end{equation}
and thus that
\begin{equation}\label{Eq:5.8}
\|u\|_{\dot{B}_{p,1}^{\frac{d}{p}-1}}\lesssim\|u^{\ell}\|_{\dot{B}_{2,1}^{\frac{d}{2}-1}}+\|u\|^{h}_{\dot{B}_{p,1}^{\frac{d}{p}-1}}
\lesssim (1+t)^{-\frac{d}{4}-\frac{s_{1}-1}{2}} \ \ \hbox{for all} \ \ t\geq 0.
\end{equation}
On the other hand, let $\tilde{s}_{1}\triangleq s_{1}+d(\frac{1}{2}-\frac{1}{p})$. We get from Propositions \ref{Prop2.2} and \ref{Prop2.3} that
\begin{equation}\label{Eq:5.9}
\|a^{\ell}\|_{\dot{B}_{p,1}^{s}}\lesssim \|a^{\ell}\|_{\dot{B}_{2,1}^{s+d(\frac{1}{2}-\frac{1}{p})}}\lesssim
\big(\|\tilde{a}\|^{\ell}_{\dot{B}_{2,\infty}^{-s_{1}}}\big)^{\theta_{1}} \big(\|\tilde{a}\|^{\ell}_{\dot{B}_{2,\infty}^{\frac{d}{2}-1}}\big)^{1-\theta_{1}}
\end{equation}
if
$s\in \big(-\tilde{s}_{1}-1,\frac{d}{p}-2\big) \ \ \hbox{and} \ \ \theta_{1}=\frac{\frac{d}{p}-2-s}{\frac{d}{2}-1+s_{1}} \in (0,1)$,
and that
\begin{equation}\label{Eq:5.10}
\|u\|^{\ell}_{\dot{B}_{p,1}^{s}}\lesssim \|u\|^{\ell}_{\dot{B}_{2,1}^{s+d(\frac{1}{2}-\frac{1}{p})}}\lesssim
\big(\|u\|^{\ell}_{\dot{B}_{2,\infty}^{-s_{1}}}\big)^{\theta_{2}} \big(\|u\|^{\ell}_{\dot{B}_{2,\infty}^{\frac{d}{2}-1}}\big)^{1-\theta_{2}}
\end{equation}
if
$s\in \big(-\tilde{s}_{1},\frac{d}{p}-1\big) \ \ \hbox{and} \ \ \theta_{2}=\frac{\frac{d}{p}-1-s}{\frac{d}{2}-1+s_{1}} \in (0,1)$.
Noticing the fact that
\begin{equation*}
\|(\tilde{a},u)\|^{\ell}_{\dot{B}^{-s_{1}}_{2,\infty}}\leq C_{0}
\end{equation*}
for all $t\geq 0$, with the aid of \eqref{Eq:5.6}, \eqref{Eq:5.9}, \eqref{Eq:5.10}, we have
\begin{equation*}
\|a^{\ell}(t)\|_{\dot{B}_{p,1}^{s}}\lesssim \big[(1+t)^{-\frac{d/2-1+s_{1}}{2}}\big]^{1-\theta_{1}}=(1+t)^{-\frac{d}{2}(\frac{1}{2}-\frac{1}{p})-\frac{s_{1}+s+1}{2}}
\end{equation*}
and
\begin{equation*}
\|u^{\ell}(t)\|_{\dot{B}_{p,1}^{s}}\lesssim \big[(1+t)^{-\frac{d/2-1+s_{1}}{2}}\big]^{1-\theta_{2}}=(1+t)^{-\frac{d}{2}(\frac{1}{2}-\frac{1}{p})-\frac{s_{1}+s}{2}}
\end{equation*}
for all $t\geq 0$, which lead to
\begin{equation} \label{Eq:5.11}
\|a(t)\|_{\dot{B}_{p,1}^{s}}\lesssim\|a^{\ell}(t)\|_{\dot{B}_{p,1}^{s}}+\|a(t)\|^{h}_{\dot{B}_{p,1}^{s}}
\lesssim (1+t)^{-\frac{d}{2}(\frac{1}{2}-\frac{1}{p})-\frac{s_{1}+s+1}{2}}
\end{equation}
for $s\in(-\tilde{s}_{1}-1,\frac{d}{p}-2)$, and
\begin{equation}\label{Eq:5.12}
\|u(t)\|_{\dot{B}_{p,1}^{s}}\lesssim\|u^{\ell}(t)\|_{\dot{B}_{p,1}^{s}}+\|u(t)\|^{h}_{\dot{B}_{p,1}^{s}}\lesssim (1+t)^{-\frac{d}{2}(\frac{1}{2}-\frac{1}{p})-\frac{s_{1}+s}{2}}
\end{equation}
for $s\in(-\tilde{s}_{1},\frac{d}{p}-1)$.
Hence, combining with \eqref{Eq:5.7}, \eqref{Eq:5.8}, \eqref{Eq:5.11}, \eqref{Eq:5.12} yields Theorem \ref{Thm1.2}.
\subsection{Proof of Corollary \ref{Cor1.1}}
For brevity, it suffices to establish those time-optimal decay rates of $a$. Thanks to $p\leq r\leq\infty$, it follows from the embeddings $\dot{B}^{0}_{r,1}\hookrightarrow L^{r}$ and $\dot{B}_{p,1}^{l+d\big(\frac{1}{p}-\frac{1}{r}\big)}\hookrightarrow \dot{B}_{r,1}^{l}$ $(l\in\mathbb{R})$, \eqref{Eq:5.7} and \eqref{Eq:5.11} that
\begin{equation*}
\|\Lambda ^{l}a\|_{L^{r}}\lesssim \|a\|_{\dot{B}_{r,1}^{l}}\lesssim \|a\|_{\dot{B}_{p,1}^{l+d\big(\frac{1}{p}-\frac{1}{r}\big)}}
\lesssim
(1+t)^{-\frac{d}{2}(\frac{1}{2}-\frac{1}{r})-\frac{s_{1}+l+1}{2}}
\end{equation*}
for $p\leq r\leq\infty$ and $l\in\mathbb{R}$ satisfying $-\tilde{s}_{1}-1<l+d\big(\frac{1}{p}-\frac{1}{r}\big)\leq\frac{d}{p}-2$. This completes the proof of Corollary \ref{Cor1.1}.

\section*{Acknowledgments}
Last but not least, he is very grateful to Professor J. Xu for the suggestion on this question.

\end{document}